\documentclass[12pt]{amsart}
\usepackage{amsmath,amssymb}      
\usepackage{ascmac}
\usepackage{mathrsfs}
\usepackage[abbrev]{amsrefs}
\usepackage{comment}

\usepackage{amsthm}
\usepackage{latexsym}
\usepackage{mathtools}
\usepackage{amsfonts}
\usepackage{bm}

\allowdisplaybreaks[1]
\theoremstyle{definition}
\newtheorem{thm}{Theorem}[section]

\newtheorem{prop}[thm]{Proposition}
\newtheorem{lem}[thm]{Lemma}
\newtheorem{dfn}[thm]{Definition}

\newtheorem*{dfn*}{Definition}
\newtheorem*{rem*}{Remark}

\newenvironment{pr}
   {{\noindent \bf Proof.   }}{\hfill \qed}

\def\<{\langle }
\renewcommand{\<}{\langle}
\renewcommand{\>}{\rangle }

\newcommand{\eqsp}[1]{{\begin{equation}\begin{split}#1\end{split}\end{
equation}}}

\usepackage{graphicx,xcolor}

\begin{document}


\begin{center}
\textbf{\Large{Higher-order derivative estimates for the parabolic Lam\'{e} system on a smooth bounded domain}}
\end{center}
\vskip10mm

\centerline{Yoshinori Furuto${}^*$ and Tsukasa Iwabuchi${}^{**}$}
\vskip5mm
\centerline{Mathematical Institute, Tohoku University}
\centerline{Sendai 980-8578 Japan}
\footnote[0]{\it{Mathematics Subject Classification}: 35K40; 35K50; 76N06}
\footnote[0]{\it{Keywords}: Lam\'{e} system, derivative estimates, smooth bounded domain}
\footnote[0]{E-mail: $^*$yoshinori.furuto.p3@dc.tohoku.ac.jp,
$^{**}$t-iwabuchi@tohoku.ac.jp}

\begin{center}
\begin{minipage}{120mm}
We consider the parabolic Lam\'{e} system on a bounded domain.
We focus on two types of inequalities for higher-order derivatives of solutions.
The first is related to an $L^p$-$L^p$ estimate locally in time in the Lebesgue space setting, 
which includes the endpoint cases $p=1$ and $p=\infty$.
The second concerns an equivalent norm of Besov spaces by means of the solution of the parabolic Lam\'{e} system.
\end{minipage}
\end{center}

\section{Introduction}

Let $d \geq 2$. Suppose that $\Omega$ is a bounded domain of $\mathbb{R}^d$.
We consider the parabolic Lam\'{e} system with the Dirichlet boundary condition.
\begin{equation} \label{eq:L}
\begin{cases}
\partial_t u - (\mu \Delta + (\mu + \lambda)\nabla \text{div}) u = 0
&\quad \text{in } \ (0, T) \times \Omega, \\
u =  0
&\quad \text{on } \ (0, T) \times \partial \Omega, \\
u(0, \cdot) = u_0(\cdot) 
&\quad \text{in } \ \Omega.
\end{cases}
\end{equation}
The unknown function is $u = u(t,x) \in \mathbb{R}^d$, 
and the constants $\mu$ and $\lambda$ denote the shear viscosity and the bulk viscosity, respectively.
Moreover, due to the ellipticity of the operator $\mu \Delta + (\mu + \lambda)\nabla \operatorname{div}$, we assume that 
$\mu > 0$ and $2\mu + \lambda > 0$ (see~\cite[Section~1.1]{paper:MiMo-2010} for more details).

The parabolic Lam\'{e} system originates from the linearization of the momentum equation of the compressible Navier--Stokes equations
\begin{equation}
\partial_t (\rho u) + \text{div}(\rho u \otimes u) - \mu \Delta u - (\mu + \lambda)\nabla \text{div} u + \nabla P = 0
\quad \text{ in } (0, T) \times \Omega
\end{equation}
around the constant state $(\rho, u) = (1,0)$ after neglecting the pressure term.
We refer to~\cite{paper:MiMo-2010} and \cite{paper:DaTo-2022}, 
for references summarizing properties of the Lam\'{e} operator on $L^p(\Omega)$ for $p \in (1,\infty)$.
In this paper, we investigate higher-order derivative estimates for solutions to~\eqref{eq:L}, 
including the end-point cases $p = 1, \infty$.

We define the Lam\'{e} operator $- \mathcal{L}$ on $L^2(\Omega)$ by  
\begin{equation*}
\begin{cases}
\mathcal{D}(\mathcal{L}) = H^1_0(\Omega) \cap 
\left\{ u \in L^2(\Omega) \mid (\mu \Delta + (\mu + \lambda)\nabla \text{div}) u \in L^2(\Omega) \right\}, 
\\
-\mathcal{L} u = -(\mu \Delta + (\mu + \lambda)\nabla \text{div}) u , \quad u \in \mathcal{D}(\mathcal{L}).
\end{cases}
\end{equation*}
Then $- \mathcal{L}$ is a non-negative self-adjoint operator and 
the semigroup on $L^2(\Omega)$ generated by $-\mathcal{L}$ is defined via the spectral theorem as
\begin{equation*}
\begin{aligned}
e^{t\mathcal{L}}
= \int_{0}^{\infty} e^{-t \lambda} \, \mathrm{d}E_{- \mathcal{L}}(\lambda).
\end{aligned}
\end{equation*}
Here, $\{E_{- \mathcal{L}}(\lambda)\}_{\lambda \in \mathbb{R}}$ denotes the resolution of the identity 
uniquely determined by the spectral theorem. 
For $\psi \in C((0, \infty)) \cap L^\infty(0, \infty)$, $\psi(-\mathcal{L})$ on $L^2(\Omega)$ is defined by
\begin{equation*}
\psi(-\mathcal{L})
= \int_{0}^{\infty} \psi(\lambda) \, \mathrm{d}E_{- \mathcal{L}}(\lambda).
\end{equation*}

As in the paper~\cite{paper:IwMaTa-2019} and \cite{paper:Iw-2018}, 
we introduce the distributions and Besov spaces associated with the Lam\'{e} operator. 
Let $\{\phi_j\}_{j\in\mathbb{Z}}$ be the dyadic Littlewood--Paley partition of unity:
\begin{equation*}
\begin{aligned}
&\phi_0(\cdot) \in C_c^\infty(\mathbb{R}), 
\quad \text{supp} \, \phi_0 \subset \left\{ \lambda \in \mathbb{R} \mid \frac{1}{2} \leq \lambda \leq 2 \right\}, 
\quad \sum_{j \in \mathbb{Z}} \phi_0(2^{-j} \lambda) = 1 \quad \text{ for } \lambda > 0, 
\\
&\phi_j(\lambda) \coloneqq \phi_0(2^{-j} \lambda) \quad \text{ for } \lambda \in \mathbb{R}, 
\end{aligned}
\end{equation*}
and define the tempered distributions as follows.

\begin{dfn}
\noindent(i) $\mathcal{X}(\mathcal{L})$ is defined by letting
\begin{equation*}
\mathcal{X}(\mathcal{L}) \coloneqq \{ f \in L^1(\Omega) \cap \mathcal{D}(\mathcal{L}) 
\mid (-\mathcal{L})^M f \in L^1(\Omega) \cap \mathcal{D}(\mathcal{L}) \quad \text{ for all } M \in \mathbb{N} \},
\end{equation*}
equipped with the family of seminorms $\{ p_{M}(\cdot) \}_{M=1}^{\infty}$ given by
\begin{equation*}
p_{M}(f) \coloneqq \| f \|_{L^1(\Omega)} 
+ \sup_{j \in \mathbb{N}} 2^{Mj} \| \phi_j(\sqrt{-\mathcal{L}}) f \|_{L^1(\Omega)}.
\end{equation*}
$\mathcal{X}'(\mathcal{L})$ denotes the topological dual of $\mathcal{X}(\mathcal{L})$.

\noindent(ii) $\mathcal{Z}(\mathcal{L})$ is defined by letting
\begin{equation*}
\mathcal{Z}(\mathcal{L}) \coloneqq \{ f \in \mathcal{X}(\mathcal{L}) 
\mid \sup_{j \leq 0} 2^{M |j|} \| \phi_j(\sqrt{-\mathcal{L}}) f \|_{L^1(\Omega)} < \infty \quad \text{ for all } M \in \mathbb{N} \}, 
\end{equation*}
equipped with the family of seminorms $\{ q_{M}(\cdot) \}_{M=1}^{\infty}$ given by
\begin{equation*}
q_{M}(f) \coloneqq \| f \|_{L^1(\Omega)} 
+ \sup_{j \in \mathbb{Z}} 2^{M |j|} \| \phi_j(\sqrt{-\mathcal{L}}) f \|_{L^1(\Omega)}.
\end{equation*}
$\mathcal{Z}'(\mathcal{L})$ denotes the topological dual of $\mathcal{Z}(\mathcal{L})$.
\end{dfn}

In order to treat operators on more general spaces, 
We define mapping on $\mathcal{X}'(\mathcal{L})$ and $\mathcal{Z}'(\mathcal{L})$.

\begin{dfn} \label{def:12-012}
Let $\psi \in C((0, \infty)) \cap L^\infty(0, \infty)$.

\noindent
(i) For mapping $\psi(\mathcal{L}) : \mathcal{X}(\mathcal{L}) \to \mathcal{X}(\mathcal{L})$, 
we define $\psi(\mathcal{L}) : \mathcal{X}'(\mathcal{L}) \to \mathcal{X}'(\mathcal{L})$ by letting
\begin{equation*}
\begin{aligned}
{}_{\mathcal{X}'}\< \psi(\mathcal{L})f, g \>_{\mathcal{X}}
\coloneqq {}_{\mathcal{X}'}\< f, \psi(\mathcal{L})g \>_{\mathcal{X}} 
\quad \text{ for all } g \in \mathcal{X}(\mathcal{L}).
\end{aligned}
\end{equation*}

\noindent
(ii) For mapping $\psi(\mathcal{L}) : \mathcal{Z}(\mathcal{L}) \to \mathcal{Z}(\mathcal{L})$, 
we define $\psi(\mathcal{L}) : \mathcal{Z}'(\mathcal{L}) \to \mathcal{Z}'(\mathcal{L})$ by letting
\begin{equation*}
\begin{aligned}
{}_{\mathcal{Z}'}\< \psi(\mathcal{L})f, g \>_{\mathcal{Z}}
\coloneqq {}_{\mathcal{Z}'}\< f, \psi(\mathcal{L})g \>_{\mathcal{Z}} 
\quad \text{ for all } g \in \mathcal{Z}(\mathcal{L}).
\end{aligned}
\end{equation*}
\end{dfn}

We consider semigroups $e^{t \mathcal{L}}$ on $\mathcal{X}'(\mathcal{L})$ and $\mathcal{Z}'(\mathcal{L})$ via this construction.
Note that Lebesgue spaces $L^p(\Omega)$ are embedded into $\mathcal{X}'(\mathcal{L})$ and $\mathcal{Z}'(\mathcal{L})$.

We first consider the $L^p$--$L^p$ estimates for higher-order derivatives.  
When the domain is the whole space $\Omega = \mathbb{R}^d$, 
the solution to (L) can be written explicitly as 
\begin{equation*}
\begin{aligned}
u(t) = K * u_0, 
\quad &K_{ij}(t, x) \coloneqq G_{\mu t} \delta_{ij} + \partial_i \partial_j \int_{\mu t}^{(2\mu + \lambda)t} G_s \, \mathrm{d}s, 
\\
&G_t(x) \coloneqq \dfrac{1}{(4 \pi t)^{d/2}} \exp{\left( -\dfrac{|x|^2}{4t} \right)}.
\end{aligned}
\end{equation*}
In fact, by Fourier transform, 
\begin{equation*}
\begin{aligned}
\partial_t \widehat{u} 
&= - (\mu |\xi|^2 I + (\mu + \lambda) (\xi \otimes \xi)) \widehat{u}
\\
&= - \left( \mu |\xi|^2 \left(I - \frac{\xi \otimes \xi}{|\xi|^2} \right) 
  + (2 \mu + \lambda) (\xi \otimes \xi) \right) \widehat{u}, 
\end{aligned}
\end{equation*}
which implies from orthogonality of the divergence-free part and the curl-free part, 
\begin{equation*}
\begin{aligned}
\widehat{u}(t, \xi)
&= e^{-\mu |\xi|^2 t} \left(I - \frac{\xi \otimes \xi}{|\xi|^2} \right) \widehat{u_0}(\xi) 
  + e^{- (2\mu + \lambda) |\xi|^2 t} \frac{\xi \otimes \xi}{|\xi|^2} \widehat{u_0}(\xi)
\\
&= e^{-\mu |\xi|^2 t} \widehat{u_0}(\xi) 
  + \frac{\xi \otimes \xi}{|\xi|^2} (e^{- (2\mu + \lambda) |\xi|^2 t} - e^{-\mu |\xi|^2 t}) \widehat{u_0}(\xi)
\\
&= e^{-\mu |\xi|^2 t} \widehat{u_0}(\xi) 
  + \frac{\xi \otimes \xi}{|\xi|^2} 
  \left( \int_{\mu t}^{(2\mu + \lambda) t} \partial_s(e^{-|\xi|^2 s}) \, \mathrm{d}s \right) \widehat{u_0}(\xi)
\\
&= e^{-\mu |\xi|^2 t} \widehat{u_0}(\xi)
  - (\xi \otimes \xi) \int_{\mu t}^{(2\mu + \lambda) t} e^{- |\xi|^2 s} \widehat{u_0}(\xi) \, \mathrm{d}s.
\end{aligned}
\end{equation*}
Therefore, a direct computation yields the following estimate:
\begin{equation*}
\|  \partial_x^\gamma u(t) \|_{L^p(\mathbb{R}^d)} \leq C t^{-|\gamma| / 2} \| u_0 \|_{L^p(\mathbb{R}^d)}
\quad \textup{ for all } t \in (0, 1), 
\end{equation*}
where $1 \leq p \leq \infty$, 
$\gamma$ is an arbitrary multi-indices $(\gamma_1, \gamma_2, \ldots, \gamma_d) \in (\mathbb{Z}_{\geq 0})^d$ 
and $|\gamma| = \gamma_1 + \gamma_2 + \cdots + \gamma_d$.
When $\Omega$ is a smooth bounded domain, the basic properties of analytic semigroups 
together with the standard $L^p$ elliptic estimates 
imply that the same type of estimate holds for $1 < p < \infty$ and multi-indices $\gamma$ satisfying $|\gamma| = 0, 1, 2$.
One of our main theorem provides higher-order derivative estimates on bounded domains which includes the endpoint cases $p = 1$ and $p = \infty$.

\vspace{3mm}

\begin{thm} \label{thm:1}
Let $d \geq 2$, $\gamma \in (\mathbb{Z}_{\geq 0})^d$ and $1 \leq p \leq \infty$. 
Suppose that $\Omega$ is a bounded domain of ${\mathbb R}^d$ with $C^{|\gamma| + 2}$ boundary. 
Then, a positive constant $C$ exists such that 
for every $u_0 \in C_c^\infty (\Omega)$, $u(t) := e^{t \mathcal{L}} u_0$ in $\mathcal{X}'(\mathcal{L})$ satisfies that
\begin{equation} \label{eq:20260323-3}
\begin{aligned} 
\|  \partial_x^\gamma u(t) \|_{L^p} &\leq C t^{-|\gamma| / 2} \| u_0 \|_{L^p}
\quad \textup{ for all } t \in (0, 1).
\end{aligned}
\end{equation}
For $t \geq 1$, the norm decays exponentially.
\end{thm}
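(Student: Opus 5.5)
We will prove Theorem~\ref{thm:1} by combining Gaussian-type pointwise bounds for the heat kernel of $e^{t\mathcal{L}}$ with higher-order elliptic regularity, and by closing the endpoint cases $p=1$ and $p=\infty$ through duality. The Lam\'e system has no maximum principle, so Gaussian bounds cannot be obtained by comparison. Instead we will derive them by a Davies-type perturbation combined with Sobolev embeddings.

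\emph{Step 1: kernel bound at order zero.} Let $K_t(x,y)$ denote the kernel of $e^{t\mathcal{L}}$ on $L^2(\Omega)$. We first show
\[
|K_t(x,y)|\le C t^{-d/2} e^{-c|x-y|^2/t},\qquad 0<t<1 .
\]
\begin{itemize}
\item Conjugate by $e^{\rho\cdot x}$ with $\rho\in\mathbb{R}^d$. Since $-\mathcal{L}$ is coercive on $H^1_0$ (Korn's inequality gives $\mu\|\nabla u\|^2+(\mu+\lambda)\|\mathrm{div}\,u\|^2\ge c\|\nabla u\|^2$ when $2\mu+\lambda>0$), the energy method yields $\|e^{\rho\cdot x}e^{t\mathcal{L}}e^{-\rho\cdot x}\|_{L^2\to L^2}\le e^{C|\rho|^2 t}$.
\item Upgrade this to $L^2\to L^\infty$ with the factor $t^{-d/4}$. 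Iterate the regularity $\mathcal{D}((-\mathcal{L})^k)\subset H^{2k}$, valid for a boundary of class $C^{2k}$, and apply Sobolev embedding with $2k>d/2$ on the time scale $t$ via rescaling.
\item By duality and the semigroup property obtain the $L^1\to L^\infty$ bound $t^{-d/2}e^{C|\rho|^2t}$. Optimizing in $\rho$ then gives the Gaussian factor.
\end{itemize}
If the available boundary regularity is insufficient for this route, we will fall back on known results for strongly elliptic systems with $C^1$-type coefficients on Lipschitz or $C^{1}$ domains.

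\emph{Step 2: kernel bounds for derivatives.} Next we aim for
\[
|\partial_x^\gamma K_t(x,y)|\le C t^{-(d+|\gamma|)/2}e^{-c|x-y|^2/t}.
\]
\begin{itemize}
\item Write $\partial_x^\gamma K_t(\cdot,y)=\partial_x^\gamma e^{(t/2)\mathcal{L}}K_{t/2}(\cdot,y)$.
\item Apply local elliptic estimates near and up to the boundary on balls $B(x_0,\sqrt t)\cap\Omega$. After rescaling to unit size, the boundary stays uniformly $C^{|\gamma|+2}$ because $t<1$. This controls $\partial^\gamma v$ in $L^\infty$ by $t^{-|\gamma|/2}$ times local $L^2$ averages of $v$ and of $t^{k}\mathcal{L}^k v$, where $\mathcal{L}^k$ commutes with the semigroup.
\item Use analyticity, $\|t^k\mathcal{L}^k e^{t\mathcal{L}}\|\lesssim 1$, in the weighted (Davies) form, so that Gaussian decay is retained.
\end{itemize}
This step is the main obstacle. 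It needs local boundary Schauder or $H^m$ estimates of order $|\gamma|+2$ for the Lam\'e system with Dirichlet data, uniform under rescaling, which is exactly why the hypothesis is a $C^{|\gamma|+2}$ boundary. It also needs a careful Caccioppoli-type localization that keeps the weights.

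\emph{Step 3: conclusion.} With the Gaussian derivative bound,
\[
\sup_x\int_\Omega|\partial_x^\gamma K_t(x,y)|\,dy+\sup_y\int_\Omega|\partial_x^\gamma K_t(x,y)|\,dx\le Ct^{-|\gamma|/2}.
\]
Schur's test then gives \eqref{eq:20260323-3} for all $1\le p\le\infty$, including both endpoints. For $u_0\in C_c^\infty(\Omega)$ the $\mathcal{X}'(\mathcal{L})$ solution coincides with the $L^2$ one, so the kernel representation is legitimate.

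For $t\ge1$, let $\lambda_1>0$ be the first eigenvalue of $-\mathcal{L}$, which is positive by Korn and Poincar\'e. Write $e^{t\mathcal{L}}=e^{(1/2)\mathcal{L}}e^{(t-1)\mathcal{L}}e^{(1/2)\mathcal{L}}$. The outer factors map $L^p\to L^2$ and $L^2\to W^{|\gamma|,\infty}$ boundedly by the previous bounds (boundedness of $\Omega$ gives the embeddings), and the middle factor is bounded by $e^{-\lambda_1(t-1)}$ on $L^2$. This yields the exponential decay.
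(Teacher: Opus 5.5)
Your proposal is correct in outline but takes a genuinely different route from the paper.

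\textbf{Your route.} You reduce everything to pointwise Gaussian bounds $|\partial_x^\gamma K_t(x,y)|\le Ct^{-(d+|\gamma|)/2}e^{-c|x-y|^2/t}$ for the Green matrix up to the boundary. You then close all $p\in[1,\infty]$ at once by Schur's test.

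\textbf{The paper's route.} The paper never proves derivative kernel bounds. It uses only the order-zero Gaussian bound (Lemma~\ref{lemma:021}) to get amalgam-space bounds for $(1-t\mathcal{L})^{-\beta}$ (Proposition~\ref{prop:207}). It gets $p=1$ from $L^2$ derivative bounds for $R_{M,t}$ and $\varphi(-t\mathcal{L})$ (Proposition~\ref{prop:12-010}), combined with weighted commutator estimates for $|||\nabla^\ell\widetilde\varphi(-t\mathcal{L})|||_\alpha$. It treats $p=\infty$ separately, by an Abe--Giga blow-up contradiction for $N_\ell[u]$ that rests on the Liouville-type uniqueness of Proposition~\ref{prop:12-015} in $\mathbb{R}^d$ and the half-space.

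\textbf{What each buys.} Your approach gives a stronger intermediate result and one argument for every $p$, with no interpolation and explicit constants. It also gives a clean spectral-gap proof of the exponential decay for $t\ge1$, which the paper asserts but does not write out. The price is that all the difficulty sits in your Step~2. That step is essentially the classical Solonnikov--Eidelman Green-matrix estimate: local boundary regularity for the Dirichlet Lam\'e system, uniform under parabolic rescaling, plus Gaussian bounds for $(t\mathcal{L})^k e^{t\mathcal{L}}$ obtained from analyticity. The paper avoids pointwise derivative kernels altogether. In exchange it needs two unrelated endpoint arguments and gets a non-constructive constant at $p=\infty$.

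\textbf{Points to fix when writing it out.}
\begin{itemize}
\item In Step~2, and in the $H^{2k}$ part of Step~1, $H^{|\gamma|+2}$ control does not bound $\partial^\gamma$ in $L^\infty$ once $d\ge4$. Using $H^m$ with $m>|\gamma|+d/2$ instead requires more than a $C^{|\gamma|+2}$ boundary. To stay within the hypothesis, use local $W^{|\gamma|+2,r}$ estimates with $r>d/2$, after a finite local $L^2\to L^r$ bootstrap.
\item The coercivity on $H^1_0$ comes from the identity $\|\nabla u\|_{L^2}^2=\|\operatorname{div}u\|_{L^2}^2+\|\operatorname{rot}u\|_{L^2}^2$, not from Korn's inequality.
\end{itemize}
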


\vspace{3mm}

The second estimate concerns the equivalence between the $L_t^q L_x^p$ norm of solutions 
and the Besov norm of the initial data.
By the characterization of homogeneous Besov spaces via the heat kernel, 
it is well known that for $1 \le p \le \infty$ and $1 \le q < \infty$, 
the $L_t^q L_x^p$ norm of solutions to the linear heat equation on the whole space is equivalent to 
the $\dot{B}_{p,q}^{-2/q}$ norm of the initial data (e.g. Section~2.4 in the book~\cite{book:BCD}, Section~5.7 in the book~\cite{book:LePi}). 
\begin{equation*}
C^{-1} \| f \|_{\dot{B}^{-\frac{2}{q}}_{p, q}(\mathbb{R}^d)}
\leq \| e^{t \Delta} f \|_{L_t^q(0, \infty; L_x^p(\mathbb{R}^d))} 
\leq C \| f \|_{\dot{B}^{-\frac{2}{q}}_{p, q}(\mathbb{R}^d)}.
\end{equation*}
As a related result in the whole space, $L^1$-maximal regularity for more general parabolic equations, 
including the parabolic Lam\'{e} system, has been established in~\cite{paper:OgSh-2016}. 
For bounded domains, 
the following inequality follows from the $L^1$-maximal regularity results established 
in~\cite{paper:DaTo-2022} and~\cite{paper:KuSh-2025}.
\begin{equation*}
\| u \|_{L_t^1(0, T; B^{s+2}_{p, 1}(\Omega))} \leq C \| u_0 \|_{B^s_{p, 1}(\Omega)}
\quad (1 < p < \infty, -1 + \frac{1}{p} < s < \frac{1}{p}), 
\end{equation*}
where the space $B^s_{p, 1}(\Omega)$ is a Besov space on the domain, 
defined as the restriction of functions of the whole space.
Moreover, \cite{paper:ChOgSh-2025} proved $L^1$-maximal regularity for the parabolic Lam\'{e} system on the half-space. 

We introduce Besov space on a domain.

\begin{dfn}
For $s \in \mathbb{R}$ and $1 \leq p, q \leq \infty$, $\dot{B}_{p, q}^s(\mathcal{L})$ is defined by letting
\begin{equation*}
\begin{aligned}
&\dot{B}_{p, q}^s(\mathcal{L}) \coloneqq \{ f \in \mathcal{Z}'(\mathcal{L}) 
\mid \| f \|_{\dot{B}_{p, q}^s(\mathcal{L})} < \infty \}, 
\\
&\| f \|_{\dot{B}_{p, q}^s(\mathcal{L})} \coloneqq 
\| \{2^{sj} \| \phi_j(\sqrt{-\mathcal{L}}) f \|_{L^p(\Omega)} \}_{j \in \mathbb{Z}} \|_{\ell^q(\mathbb{Z})}. 
\end{aligned}
\end{equation*}
\end{dfn}

As a similar estimate on bounded domains, 
we establish the following equivalence with respect to the homogeneous Besov norm defined above.

\begin{thm} \label{thm:2}
Let $d \geq 2$, $s \in \mathbb{R}, s_0 > s/2, 1 \leq p, q \leq \infty$. 
Then, a positive constant $C$ exists such that for every $u_0 \in \dot{B}^{s}_{p, q}(\mathcal{L})$, 
\begin{equation} \label{eq:20260205-1} 
\begin{aligned} 
C^{-1} \| u_0 \|_{\dot{B}^{s}_{p, q}(\mathcal{L})} 
\leq \left\{ \int_0^\infty (t^{-\frac{s}{2}} \| (-t \mathcal{L})^{s_0} e^{t \mathcal{L}} u_0 \|_{L^p(\Omega)} )^q
\frac{\mathrm{d}t}{t} \right\}^{\frac{1}{q}}
\leq C \| u_0 \|_{\dot{B}^{s}_{p, q}(\mathcal{L})}.
\end{aligned}
\end{equation}
Moreover, when $\alpha \in \mathbb{Z}_{\geq 0}, \alpha / 2 + s_0 > s / 2$, the following holds.
\begin{equation} \label{eq:20260205-2} 
\begin{aligned}
C^{-1} \| u_0 \|_{\dot{B}^{s}_{p, q}(\mathcal{L})} 
\leq \left\{ \int_0^\infty (t^{-\frac{s}{2}} \| t^{\frac{\alpha}{2}} \nabla^\alpha (-t \mathcal{L})^{s_0} e^{t \mathcal{L}} u_0 \|_{L^p(\Omega)} )^q
\frac{\mathrm{d}t}{t} \right\}^{\frac{1}{q}}
\leq C \| u_0 \|_{\dot{B}^{s}_{p, q}(\mathcal{L})}.
\end{aligned}
\end{equation}
\end{thm}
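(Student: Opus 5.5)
The plan is to reduce Theorem~\ref{thm:2} to uniform $L^p$ bounds for spectral multipliers localized by $\phi_j(\sqrt{-\mathcal{L}})$, together with derivative bounds of the type in Theorem~\ref{thm:1}. The key ingredient is a Gaussian-type (or at least Mikhlin-type) $L^p$ boundedness, uniform in $j$ and for all $1\le p\le\infty$, of operators $\psi(2^{-2j}(-\mathcal{L}))$ with $\psi$ smooth and rapidly decaying. Concretely, we would first establish
\begin{equation*}
\| \psi(-2^{-2j}\mathcal{L}) f\|_{L^p} \le C \|f\|_{L^p},
\end{equation*}
uniformly in $j\in\mathbb{Z}$, for $\psi\in C_c^\infty((0,\infty))$ and for $\psi(\lambda)=\lambda^{s_0}e^{-\lambda}$. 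This would come from pointwise kernel bounds for $e^{t\mathcal{L}}$ of the form $C t^{-d/2}e^{-c|x-y|^2/t}$, at least for bounded times, combined with exponential decay for large $t$ (the bottom of the spectrum is positive since $\Omega$ is bounded). A standard route is to write $\psi(\lambda)=\tilde\psi(\lambda)e^{-\lambda}$ and expand via Fourier transform in $e^{-(1-i\tau)\lambda}$, or to use Davies--Gaffney estimates plus Sobolev embedding through $\mathcal{D}(\mathcal{L}^M)$. We also note that for $j\le j_0$, with $2^{2j_0}$ below the first eigenvalue, $\phi_j(\sqrt{-\mathcal{L}})=0$, so only $j\gtrsim j_0$ matter.

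\emph{Upper bound in \eqref{eq:20260205-1}.} Decompose $u_0=\sum_j \phi_j(\sqrt{-\mathcal{L}})u_0$ in $\mathcal{Z}'$. For $t\sim 2^{-2k}$, write
\begin{equation*}
(-t\mathcal{L})^{s_0}e^{t\mathcal{L}}\phi_j(\sqrt{-\mathcal{L}})
=(t2^{2j})^{s_0}e^{-ct2^{2j}}\,\Psi_{t,j}(-\mathcal{L})\,\tilde\phi_j(\sqrt{-\mathcal{L}})\phi_j(\sqrt{-\mathcal{L}}),
\end{equation*}
where $\tilde\phi_j$ is a fattened cutoff and $\Psi_{t,j}$ is uniformly bounded by the multiplier lemma. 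This gives
\begin{equation*}
\|\cdot\|_{L^p}\lesssim \min\bigl((t2^{2j})^{s_0},e^{-ct2^{2j}}\bigr)\,\|\phi_j(\sqrt{-\mathcal{L}})u_0\|_{L^p}.
\end{equation*}
Multiplying by $t^{-s/2}$ yields the kernel $(2^{2(j-k)})^{s_0-s/2}$ for $j<k$, which sums because $s_0>s/2$, and an exponentially decaying kernel for $j>k$. Young's inequality for sequences on $\ell^q$, after discretizing $\int dt/t$ over dyadic intervals, then gives the bound.

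\emph{Lower bound.} Use the reproducing formula
\begin{equation*}
\phi_j(\sqrt{-\mathcal{L}})u_0=c\int_0^\infty \phi_j(\sqrt{-\mathcal{L}})(-t\mathcal{L})^{s_0}e^{t\mathcal{L}}u_0\,\frac{dt}{t},
\end{equation*}
with $c^{-1}=\int_0^\infty \lambda^{s_0}e^{-\lambda}\,d\lambda/\lambda$, after replacing $e^{t\mathcal{L}}$ by $e^{t\mathcal{L}/2}e^{t\mathcal{L}/2}$ so that the factor $\phi_j(\sqrt{-\mathcal{L}})(-t\mathcal{L})^{s_0}e^{t\mathcal{L}/2}$ is paired with a remaining $(-t\mathcal{L})^{s_0}e^{t\mathcal{L}/2}u_0$. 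The operator $\phi_j(\sqrt{-\mathcal{L}})e^{t\mathcal{L}/2}$ has norm $\lesssim \min(1,e^{-ct2^{2j}})$, and the extra $(t2^{2j})^{s_0}$ gives decay for small $t$. The same Young/Schur argument then bounds $2^{sj}\|\phi_j u_0\|_{L^p}$ in $\ell^q$ by the integral. Justifying the identity in $\mathcal{Z}'$ is routine via duality (Definition~\ref{def:12-012}).

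\emph{Derivative version \eqref{eq:20260205-2}.} For the upper bound, write
\begin{equation*}
t^{\alpha/2}\nabla^\alpha(-t\mathcal{L})^{s_0}e^{t\mathcal{L}}=\bigl[t^{\alpha/2}\nabla^\alpha e^{t\mathcal{L}/2}\bigr](-t\mathcal{L})^{s_0}e^{t\mathcal{L}/2}.
\end{equation*}
Theorem~\ref{thm:1}, together with exponential decay for $t\ge1$, bounds the bracket uniformly in $L^p$ (this needs enough boundary smoothness; we would assume $C^\infty$). The first part then applies with $e^{t\mathcal{L}/2}$. This naive argument only requires $s_0>s/2$, whereas the hypothesis allows $\alpha/2+s_0>s/2$. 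To exploit the weaker condition, for the low-frequency pieces $j<k$ we instead use the gain $t^{\alpha/2}\|\nabla^\alpha \phi_j(\sqrt{-\mathcal{L}})g\|_{L^p}\lesssim (t2^{2j})^{\alpha/2}\|g\|_{L^p}$, obtained from Theorem~\ref{thm:1} at time $2^{-2j}$, which produces the exponent $\alpha/2+s_0-s/2>0$. For the lower bound, we use the integration by parts identity
\begin{equation*}
\langle (-\mathcal{L})^{m}v,w\rangle=\langle\nabla^{\alpha}\cdot,\cdot\rangle
\end{equation*}
for even $\alpha=2m$, where Dirichlet conditions kill the boundary terms for $w=\phi_j(\sqrt{-\mathcal{L}})g$ in duality. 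For odd $\alpha$ we interpolate, using the $\alpha=1$ case via $-\mathcal{L}=-\operatorname{div}(\mu\nabla+\dots)$. This converts $\phi_j(\sqrt{-\mathcal{L}})(-t\mathcal{L})^{\alpha/2}$ into a form controlled by $\nabla^\alpha$ tested against $\nabla^{\alpha}$-type bounds on $\phi_j$-localized dual functions.

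The main obstacle is the uniform $L^1$ and $L^\infty$ multiplier bounds, i.e.\ Gaussian kernel estimates for the Lamé semigroup. The Lamé system has no maximum principle, so Gaussian bounds must come from Davies perturbation combined with higher regularity and Sobolev embedding on $\mathcal{D}(\mathcal{L}^M)$, presumably the same machinery used for Theorem~\ref{thm:1}. The lower bound in \eqref{eq:20260205-2} for odd $\alpha$ is the second delicate point.
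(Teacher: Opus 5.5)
Your architecture matches the paper's. You prove \eqref{eq:20260205-1} by the standard Littlewood--Paley/Schur argument, using $L^p$ bounds for $\psi(-2^{-2j}\mathcal{L})$ that are uniform in $j$ and come from the Gaussian bound of Lemma~\ref{lemma:021}. You prove \eqref{eq:20260205-2} from Theorem~\ref{thm:1} plus duality against $\phi_j$-localized functions.

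\textbf{Gap in the lower bound of \eqref{eq:20260205-1}.} The step fails as written. Your normalizing constant $\int_0^\infty\lambda^{s_0}e^{-\lambda}\,d\lambda/\lambda$ is infinite for $s_0\le 0$. After your splitting, the Schur kernel in $\tau=t2^{2j}$ is $\tau^{s_0+s/2}e^{-c\tau}$, which lies in $L^1(d\tau/\tau)$ only if $s_0+s/2>0$. For $s<0$ the hypothesis $s_0>s/2$ allows $s_0\le |s|/2$. This includes $s_0=0$, $s=-2/q$, which is exactly the $L^q_tL^p_x$ case of the Remark.

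The fix is to invert locally instead of reproducing globally. With $\tilde\phi_j=\phi_{j-1}+\phi_j+\phi_{j+1}$ and $t\in[2^{-2j-2},2^{-2j}]$,
\[
\phi_j(\sqrt{-\mathcal{L}})u_0=\Phi_{t,j}(-\mathcal{L})\,\tilde\phi_j(\sqrt{-\mathcal{L}})\,(-t\mathcal{L})^{s_0}e^{t\mathcal{L}}u_0,
\qquad
\Phi_{t,j}(\lambda)=\phi_j(\sqrt{\lambda})\,(t\lambda)^{-s_0}e^{t\lambda}.
\]
Here $\Phi_{t,j}$ is a smooth bump in $2^{-2j}\lambda$ with bounds uniform in $t2^{2j}\in[1/4,1]$, so it is uniformly bounded on $L^p$ by the multiplier lemma. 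Averaging over this interval in $L^q(dt/t)$ gives the lower bound with no condition on $s_0$. Alternatively, inserting $(-t\mathcal{L})^N$ with $N$ large into your formula also works. Because of the factor $\tilde\phi_j$, the local-inversion argument directly yields the $\dot B^0_{p,\infty}(\mathcal{L})$-valued version \eqref{eq:20260322-1}, which is what the derivative estimate needs.

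\textbf{Odd-$\alpha$ lower bound in \eqref{eq:20260205-2}.} The mechanism is not interpolation. You also cannot pass through $\|(-t\mathcal{L})^{1/2}F\|_{L^p}\lesssim\|t^{1/2}\nabla F\|_{L^p}$, a Riesz-transform bound that fails for $p=1,\infty$. As in the paper, write $(-\mathcal{L})^{k+1/2}F=(-\mathcal{L})^{-1/2}\operatorname{div}\sigma((-\mathcal{L})^kF)$. Then bound $\phi_j(\sqrt{-\mathcal{L}})(-\mathcal{L})^{-1/2}\operatorname{div}$ on $L^p$, uniformly in $j$, by duality. The Dirichlet condition on $(-\mathcal{L})^{-1/2}\phi_j(\sqrt{-\mathcal{L}})g$ removes the boundary term, as you anticipated. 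The dual bound is
\[
\|\nabla(-\mathcal{L})^{-1/2}\phi_j(\sqrt{-\mathcal{L}})\|_{L^{p'}\to L^{p'}}\le\|\nabla e^{2^{-2j}\mathcal{L}}\|_{L^{p'}\to L^{p'}}\,\|e^{-2^{-2j}\mathcal{L}}(-\mathcal{L})^{-1/2}\phi_j(\sqrt{-\mathcal{L}})\|_{L^{p'}\to L^{p'}}\lesssim 2^{j}\cdot 2^{-j},
\]
using Theorem~\ref{thm:1} with exponent $p'$ and the multiplier lemma. This is why the $\dot B^0_{p,\infty}$ form is needed. For even $\alpha$ no integration by parts is required, since $(-\mathcal{L})^m$ is a constant-coefficient operator of order $2m$.

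\textbf{Upper bound in \eqref{eq:20260205-2}.} Here your argument is more complete than the paper's. The paper bounds $t^{\alpha/2}\nabla^\alpha e^{t\mathcal{L}/2}$ uniformly and then invokes \eqref{eq:20260205-1}, which requires $s_0>s/2$. Your low-frequency gain $(t2^{2j})^{\alpha/2}$, obtained from Theorem~\ref{thm:1} at time $2^{-2j}$, reaches the stated condition $\alpha/2+s_0>s/2$. For example, it covers $s_0=0$, $0\le s<\alpha$, which the paper's own lower-bound argument (run with $s<1$ for $\alpha=1$) evidently has in mind.
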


\begin{rem*}
For $1 \le q < \infty$, $s = -2/q$, and $s_0 = 0$, 
the middle term in the above expression coincides with $\|u\|_{L_t^q(0,\infty; L_x^p(\Omega))}$. 
\end{rem*}

\begin{rem*}
Compared with~\cite{paper:DaTo-2022} and~\cite{paper:KuSh-2025}, 
the significance of estimate~\eqref{eq:20260205-1} lies in the fact that it includes the spatial Lebesgue exponents $p = 1, \infty$.
Moreover, a key feature of the estimate~\eqref{eq:20260205-2} is that it takes into account derivatives in all directions, 
rather than only those represented by $(-\mathcal{L})^{s_0}$.
\end{rem*}

We give a few comments on our proof.
Theorem~\ref{thm:1} is proved by a method similar to that used in \cite{paper:FuIw-2025}.
For $p = 1$, we use spectral multiplier and modify the method in~\cite{paper:IwMaTa-2018} to the higher-order resolvent of Lam\'{e} operator. 
As for $p = \infty$, we argue by contradiction used in~\cite{paper:AbGi-2013} for the Stokes equation. 
We extend the argument for higher order derivatives, introducing the following quantity
\[
N_\ell[u](t, x) := \sum_{s = 0}^{\ell} t^{s/2} |\nabla^s u(t, x)|.
\]
We show Theorem~\ref{thm:2} in a method analogous to that in the case when $\Omega = \mathbb{R}^d$ 
together with estimates in Theorem~\ref{thm:1} and the boundedness of the spectral multiplier.

This paper is organized as follows.
In Section~2, we prepare some propositions to prove the Theorem~\ref{thm:1}.
In Sections~3 and~4, we prove the Theorem~\ref{thm:1} for $p = 1, \infty$. 
In Section 5, we prove Theorem~\ref{thm:2}. 

\vskip3mm


\section{Preliminaries}

We prepare some propositions for the proof of Theorem~\ref{thm:1} and Theorem~\ref{thm:2}.

First, we present higher-order elliptic estimates for $2 \le p < \infty$.
This proposition will be used repeatedly in the proof of the main result.

\begin{prop}[{\cite[Proposition~3.3]{paper:DaTo-2022}}] \label{prop:200}
Let $k \in \mathbb{N} \cup \{0\}, p \in [2, \infty)$ and 
let $\Omega \subset \mathbb{R}^d$ be a bounded domain with $C^{k+2}$ boundary.
Assume $u \in W^{2, p}(\Omega), f \in W^{k, p}(\Omega)$ satisfies the following equation.
\begin{equation} \label{eq:EL}
\begin{cases}
(\mu \Delta + (\mu + \lambda)\nabla \text{div}) u = f
&\quad \text{in } \ \Omega, \\
u =  0
&\quad \text{on } \ \partial \Omega.
\end{cases}
\end{equation}
Then, $u \in W^{k+2, p}(\Omega)$ and there exists a constant $C > 0$ satisfying
\begin{equation*}
\| u \|_{W^{k+2, p}(\Omega)} \leq C (\| u \|_{L^p(\Omega)} + \| f \|_{W^{\ell, p}(\Omega)} ).
\end{equation*}
\end{prop}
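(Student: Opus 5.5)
The plan is to prove the estimate by induction on $k$, using the case $k=0$ as the base. The base case is the standard $W^{2,p}$ regularity for strongly elliptic systems with Dirichlet data. Since $\mu>0$ and $2\mu+\lambda>0$, the symbol $\mu|\xi|^2 I+(\mu+\lambda)\xi\otimes\xi$ has eigenvalues $\mu|\xi|^2$ and $(2\mu+\lambda)|\xi|^2$, so the Legendre--Hadamard condition holds. The Dirichlet condition satisfies the complementing (Lopatinskii--Shapiro) condition for such systems. Hence the Agmon--Douglis--Nirenberg theory gives, for a $C^2$ boundary,
\[
\|u\|_{W^{2,p}}\le C\bigl(\|u\|_{L^p}+\|f\|_{L^p}\bigr).
\]
The restriction $p\ge 2$ lets us first obtain an $H^1_0$ solution by Lax--Milgram via Korn's inequality and then bootstrap. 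In our statement, however, $u\in W^{2,p}$ is already assumed, so only the a priori estimate is needed.

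For the induction step, assume the estimate holds for $k-1$, and let $\partial\Omega\in C^{k+2}$ and $f\in W^{k,p}$. We localize with a partition of unity $\{\chi_m\}$ subordinate to a cover of $\overline\Omega$ by one interior set and finitely many boundary charts.

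\emph{Interior pieces.} For $v=\chi_0 u$, the function $\partial_j v$ solves the Lamé system with right-hand side $\partial_j(\chi_0 f)$ plus commutator terms involving at most first derivatives of $u$ times derivatives of $\chi_0$. Applying the whole-space version, or the inductive estimate on a ball, gives the $W^{k+2,p}$ bound on the interior.

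\emph{Boundary pieces.} Near a boundary point we flatten $\partial\Omega$ by a $C^{k+2}$ diffeomorphism. The transformed system has $C^{k}$ (actually $C^{k+1}$ in the leading part) variable coefficients and still satisfies the ellipticity and complementing conditions. Tangential difference quotients $D_h^{\tau}$ preserve the zero Dirichlet condition on the flat boundary. The differentiated equation has right-hand side $D_h^\tau f$ plus lower-order terms with coefficients' derivatives, controlled by $\|u\|_{W^{k+1,p}}$ via the induction hypothesis. This yields uniform bounds on $\|D^\tau_h u\|_{W^{k+1,p}}$, so all derivatives of order $k+2$ containing at least one tangential direction are bounded.

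The purely normal derivative $\partial_n^{k+2}u$ comes from solving the equation algebraically. The coefficient matrix of $\partial_n^2$ is $\mu I+(\mu+\lambda)\,n\otimes n$, whose eigenvalues are $\mu$ and $2\mu+\lambda$, so it is invertible. Thus $\partial_n^2 u$ is expressed through $f$ and derivatives with at most one normal derivative, and differentiating $k$ times in $\partial_n$ closes the bound. Summing over the charts and absorbing the commutator terms by interpolation ($\|u\|_{W^{k+1,p}}\le \varepsilon\|u\|_{W^{k+2,p}}+C_\varepsilon\|u\|_{L^p}$) gives the claimed estimate, with the right-hand side $\|f\|_{W^{k,p}}$; the $\ell$ in the statement should be read as $k$.

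The main obstacle will be the boundary step. We must check that, after flattening, the variable-coefficient system keeps a uniform $W^{2,p}$ estimate. We also need the regularity assumption $u\in W^{k+2,p}$ a priori, or its qualitative justification via difference quotients, which needs the boundary only in $C^{k+2}$ so that the coefficients have $k$ derivatives in $L^\infty$. I would handle this by freezing the coefficients at the boundary point, using the constant-coefficient half-space estimate for the Lamé system (explicit via the Fourier–Laplace transform thanks to the decomposition into the $\mu$ and $2\mu+\lambda$ modes), and treating the small oscillation of the coefficients as a perturbation on sufficiently small charts.
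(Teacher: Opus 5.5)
The paper gives no proof of this proposition; it is quoted from \cite[Proposition~3.3]{paper:DaTo-2022}. You are right that $\|f\|_{W^{\ell,p}}$ should read $\|f\|_{W^{k,p}}$. Your plan is the classical Agmon--Douglis--Nirenberg route, and most of its ingredients are correct:
the Legendre--Hadamard bound $\mu|\xi|^2|\eta|^2+(\mu+\lambda)(\xi\cdot\eta)^2\ge\min(\mu,2\mu+\lambda)|\xi|^2|\eta|^2$; the complementing condition for Dirichlet data; the invariance of both under flattening; and the recovery of the purely normal derivatives. For the last point, once $\partial_\tau u\in W^{k+1,p}$ in a chart, the equation gives $\partial_n^2u\in W^{k,p}$ directly, because the leading coefficients are $C^{k+1}$ and the lower-order ones $C^{k}$. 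Compared with the citation, your route is self-contained and shows that the a priori estimate holds for every $1<p<\infty$. The restriction $p\ge 2$ presumably reflects an $L^2$-based starting point in the source, as you suspect.

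There is, however, a step that does not go through as you have set up the induction. In the boundary step you obtain uniform bounds on $\|D_h^\tau u\|_{W^{k+1,p}}$ from a $W^{k-1,p}$ bound on the right-hand side of the differentiated equation. That requires a $W^{k+1,p}$ estimate for the \emph{flattened} variable-coefficient system on a half-ball. Your inductive hypothesis is the estimate for the Lam\'e operator on a $C^{k+1}$ domain, which says nothing about $D_h^\tau u$. Your last paragraph supplies, by freezing coefficients, only a $W^{2,p}$ estimate for the flattened system; that settles $k=1$ but not $k\ge 2$.

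You can repair this in one of three standard ways:
\begin{itemize}
\item Run the induction on a local statement for the whole class of flattened strongly elliptic systems (leading coefficients $C^{k+1}$, lower-order ones $C^{k}$), as ADN and Gilbarg--Trudinger do.
\item Use only the $W^{2,p}$ estimate: get $\partial_\tau^\beta u\in W^{2,p}$ for all tangential $|\beta|\le k$ by induction on $|\beta|$, then recover the remaining derivatives from the equation by induction on the number of normal derivatives.
\item Take the difference quotients along the flow of a $C^{k+1}$ vector field tangent to $\partial\Omega$ in the original coordinates. The cut-off quotient then solves the Lam\'e system on $\Omega$ itself with right-hand side bounded in $W^{k-1,p}$, so your hypothesis applies verbatim.
\end{itemize}

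The interior step has a smaller slip of the same kind. $\partial_j[\mathcal L,\chi_0]u$ contains second derivatives of $u$, which is harmless since $u\in W^{k+1,p}$. But you cannot apply an a priori estimate to $\partial_j v$ before knowing $\partial_j v\in W^{2,p}$, so use the whole-space multiplier argument or difference quotients there too.
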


\subsection{Proposition for the proof of Theorem~\ref{thm:1} when $p = 1$}

Based on the previous works~\cite{paper:IwMaTa-2018}, \cite{paper:FuIw-2025} for Schr\"odinger operators and the Dirichlet Laplacian, 
we check several propositions.

First, we define higher-order resolvents and state their derivative estimates on $L^2(\Omega)$.

\begin{dfn}
Let $\Omega \subset \mathbb{R}^d$ be a bounded domain, and let $-\mathcal{L}$ denote the Lam\'{e} operator on $L^2(\Omega)$.
Let $M \in \mathbb{N}$ and $t > 0$.
Then, define the higher-order resolvent $R_{M,t} \in \mathcal{B}(L^2(\Omega))$ associated with $-\mathcal{L}$ as follows.
\begin{equation*}
R_{M, t} \coloneqq (1 - t \mathcal{L})^{-M}.
\end{equation*}
\end{dfn}

\begin{lem} \label{lem:12-013}
Let $\Omega \subset \mathbb{R}^d$ be a bounded domain and let $M \in \mathbb{N}$.
Then, there exists $C > 0$ such that
\begin{equation*}
\begin{aligned}
&\| R_{M, t} \|_{\mathcal{B}(L^2(\Omega))} \leq C, 
\\
&\| \nabla R_{M, t} \|_{\mathcal{B}(L^2(\Omega))} \leq C t^{-1/2}, 
\end{aligned}
\end{equation*}
for $t > 0$.
\end{lem}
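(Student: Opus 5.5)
The plan is to work entirely in $L^2(\Omega)$, using the spectral theorem for the non-negative self-adjoint operator $-\mathcal{L}$ together with the coercivity of the associated quadratic form.

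For the first bound, note that $R_{M,t}=\psi_t(-\mathcal{L})$ with $\psi_t(\lambda)=(1+t\lambda)^{-M}$. Since $0<\psi_t(\lambda)\le 1$ for $\lambda\ge 0$, functional calculus gives
\[
\|R_{M,t}\|_{\mathcal{B}(L^2(\Omega))}\le \sup_{\lambda\ge0}|\psi_t(\lambda)|\le 1 .
\]

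For the gradient bound, first establish the coercivity estimate
\[
\|\nabla v\|_{L^2}^2\le C\,\langle -\mathcal{L}v,v\rangle = C\,\|(-\mathcal{L})^{1/2}v\|_{L^2}^2,\qquad v\in\mathcal{D}(\mathcal{L}).
\]
Integrating by parts with $v\in H^1_0(\Omega)$ gives
\[
\langle -\mathcal{L}v,v\rangle=\mu\|\nabla v\|_{L^2}^2+(\mu+\lambda)\|\operatorname{div}v\|_{L^2}^2 .
\]
There are two cases.
\begin{itemize}
\item If $\mu+\lambda\ge0$, the right-hand side is already at least $\mu\|\nabla v\|_{L^2}^2$.
\item If $\mu+\lambda<0$, use $\|\operatorname{div}v\|_{L^2}\le\|\nabla v\|_{L^2}$ for $v\in H^1_0$. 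This follows from the Fourier (or integration-by-parts) identity $\|\nabla v\|^2=\|\operatorname{div}v\|^2+\|\operatorname{curl}v\|^2$ for $H^1_0$ fields, obtained by extending $v$ by zero. It yields the lower bound $(2\mu+\lambda)\|\nabla v\|_{L^2}^2$, which is positive by the ellipticity assumption.
\end{itemize}
This coercivity step, especially the treatment of negative $\mu+\lambda$, is the only non-routine point.

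Next apply coercivity to $v=R_{M,t}f$, which lies in $\mathcal{D}(\mathcal{L})$ for $f\in L^2(\Omega)$. By functional calculus,
\[
\|\nabla R_{M,t}f\|_{L^2}^2\le C\int_0^\infty \lambda(1+t\lambda)^{-2M}\,d\|E_{-\mathcal{L}}(\lambda)f\|^2 .
\]
Since $\sup_{\lambda\ge0}\lambda(1+t\lambda)^{-2M}\le t^{-1}\sup_{x\ge0}x(1+x)^{-2}\le (4t)^{-1}$ for $M\ge1$, the integral is bounded by $C t^{-1}\|f\|_{L^2}^2$. This gives $\|\nabla R_{M,t}\|_{\mathcal{B}(L^2(\Omega))}\le Ct^{-1/2}$, with $C$ depending only on $\mu,\lambda$.

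Boundedness and smoothness of $\partial\Omega$ play no role here, consistent with the statement of Lemma~\ref{lem:12-013}.
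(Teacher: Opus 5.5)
Your proof is correct and follows the paper's argument: the spectral-theorem bound $\int_0^\infty \lambda(1+t\lambda)^{-2M}\,d\|E_{-\mathcal{L}}(\lambda)f\|_{L^2}^2\le Ct^{-1}\|f\|_{L^2}^2$ combined with coercivity of the Lam\'e form on $H^1_0(\Omega)$. Your case split on the sign of $\mu+\lambda$ via $\|\operatorname{div}v\|_{L^2}\le\|\nabla v\|_{L^2}$ rephrases the paper's identity $\langle-\mathcal{L}u,u\rangle=\mu\|\operatorname{rot}u\|_{L^2}^2+(2\mu+\lambda)\|\operatorname{div}u\|_{L^2}^2$, rests on the same decomposition $\|\nabla v\|_{L^2}^2=\|\operatorname{div}v\|_{L^2}^2+\|\operatorname{rot}v\|_{L^2}^2$, and gives the same constant $\min(\mu,2\mu+\lambda)$.
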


\begin{proof}[Proof of Lemma~\ref{lem:12-013}]
Since an estimate for $R_{M,t}$ itself is obvious from its definition, 
we focus on obtaining derivative estimates.
For any $f \in L^2(\Omega)$, we have $R_{M,t}f \in \mathcal{D}(\mathcal{L})$, 
and hence, by the spectral theorem, the following holds for any $t > 0$.
\begin{equation*}
\begin{aligned}
&\quad \| \nabla R_{M, t} f \|_{L^2(\Omega)}^2
\\
&\leq C \< -\mathcal{L} R_{M, t} f, R_{M, t} f \>_{L^2(\Omega)}
\\
&= C \int_0^\infty \dfrac{\lambda}{(1 + t \lambda)^{2M}} \, \textrm{d}\| E_{-\mathcal{L}}(\lambda) f \|_{L^2(\Omega)}^2
\\
&\leq C \int_0^\infty t^{-1} \cdot \dfrac{t \lambda}{1 + t \lambda} \cdot \dfrac{1}{(1 + t \lambda)^{2M-1}} 
\, \textrm{d}\| E_{-\mathcal{L}}(\lambda) f \|_{L^2(\Omega)}^2
\\
&\leq C t^{-1} \int_0^\infty \, \textrm{d}\| E_{-\mathcal{L}} f \|_{L^2(\Omega)}^2
\\
&= C t^{-1} \| f \|_{L_2(\Omega)}^2.
\end{aligned}
\end{equation*}
Here, the first inequality follows from the coercivity property (e.g. \cite{paper:MiMo-2010}).
\begin{equation}
\begin{aligned}
&\quad _{H^{-1}}\< -\mathcal{L} u, u \>_{H_0^1}
\\
&= \mu \int_\Omega \text{rot} \, u \cdot \text{rot} \, u \, \textrm{d}x
  + (2 \mu + \lambda) \int_\Omega \text{div} \, u \, \text{div} \, u \, \textrm{d}x
\\
&\geq \min{(\mu, 2 \mu + \lambda)} \int_\Omega (|\text{rot} \, u|^2 + |\text{div} \, u|^2) \, \textrm{d} x
\\
&= \min{(\mu, 2 \mu + \lambda)} \int_\Omega |\nabla u|^2 \, \textrm{d} x.
\end{aligned}
\end{equation}
\end{proof}

Before establishing higher-order derivative estimates on $L^2(\Omega)$, 
we first recall basic estimate without derivatives.

For $\varphi \in \mathcal{S}(\mathbb{R})$, define $\varphi(-t \mathcal{L})$ by spectral theorem.
Then, we have a classical result that
\begin{equation*}
\| \varphi(-t \mathcal{L}) \|_{\mathcal{B}(L^2(\Omega))} \leq C, 
\quad t > 0.
\end{equation*}
In fact, for any $f \in L^2(\Omega)$ and $t > 0$, it follows
\begin{equation*}
\begin{aligned}
\| \varphi(-t \mathcal{L}) \|_{L^2(\Omega)}^2
= \int_0^\infty | \varphi(t \lambda) |^2 \, \mathrm{d} \| E_{-\mathcal{L}}(\lambda) f \|_{L^2(\Omega)}^2
\leq \| \varphi \|_{L^\infty(\Omega)}^2 \| f \|_{L^2(\Omega)}^2.
\end{aligned}
\end{equation*}

By combining the above $L^2$--$L^2$ estimate with higher-order elliptic estimates (Proposition~\ref{prop:200}), 
we obtain estimates for derivatives of higher order.

\begin{prop} \label{prop:12-010}
Let $-\mathcal{L}$ denote the Lam\'{e} operator on $L^2(\Omega)$ and $\varphi \in \mathcal{S}(\mathbb{R})$.
Then, for any $M, k \in \mathbb{Z}_{\geq 0}$ satisfing $0 \leq k \leq 2M$, there exists $C > 0$ such that
\begin{equation*}
\begin{aligned}
\| \nabla^k R_{M, t} f \|_{L^2(\Omega)} \leq C t^{-k/2} \| f \|_{L^2(\Omega)}, 
\\
\| \nabla^k \varphi(-t \mathcal{L}) f \|_{L^2(\Omega)} \leq C t^{-k/2} \| f \|_{L^2(\Omega)}, 
\end{aligned}
\end{equation*}
for any $f \in L^2(\Omega)$ and $t \in (0, 1)$. 
\end{prop}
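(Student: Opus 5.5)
We argue by induction on $k$, using the elliptic regularity of Proposition~\ref{prop:200} with $p=2$ to trade two derivatives for one application of $\mathcal{L}$. Spectral calculus then converts each application of $-t\mathcal{L}$ into a bounded multiplier.

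The cases $k=0,1$ are already available. For $R_{M,t}$ they are Lemma~\ref{lem:12-013}. For $\varphi(-t\mathcal{L})$, $k=0$ is the classical bound stated above. For $k=1$ we repeat the coercivity argument of Lemma~\ref{lem:12-013}:
\[
\|\nabla \varphi(-t\mathcal{L})f\|_{L^2}^2\le C\int_0^\infty \lambda|\varphi(t\lambda)|^2\,d\|E(\lambda)f\|^2\le Ct^{-1}\sup_{\sigma\ge0}\sigma|\varphi(\sigma)|^2\,\|f\|_{L^2}^2 .
\]

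For $k\ge2$, set $u=R_{M,t}f$ (resp. $u=\varphi(-t\mathcal{L})f$), so that $u\in\mathcal{D}(\mathcal{L})$ and $\mathcal{L}u=g$. For the resolvent, $g=t^{-1}(R_{M-1,t}-R_{M,t})f$, where $R_{0,t}=I$; this identity comes from $-t\mathcal{L}R_{M,t}=R_{M-1,t}-R_{M,t}$. For the multiplier, $g=-t^{-1}\psi(-t\mathcal{L})f$ with $\psi(\sigma)=\sigma\varphi(\sigma)\in\mathcal{S}$.

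Proposition~\ref{prop:200} with $p=2$ and regularity index $k-2$ gives
\[
\|u\|_{H^{k}}\le C\big(\|u\|_{L^2}+\|\mathcal{L}u\|_{H^{k-2}}\big).
\]
This requires $u\in H^2$ initially, which follows from the $k=0$ case of the same proposition. Using it naively loses the scaling, so we apply it in scaled form. Since $t<1$, $\|\nabla^k u\|_{L^2}\le \|u\|_{H^k}$, and we bound
\[
\|g\|_{H^{k-2}}\le \sum_{j\le k-2}\|\nabla^j g\|_{L^2}.
\]
By the induction hypothesis, applied to $R_{M-1,t}$ and $R_{M,t}$ (resp. to $\psi$), each term satisfies $\|\nabla^j g\|_{L^2}\le Ct^{-1}t^{-j/2}\|f\|_{L^2}\le Ct^{-k/2}\|f\|_{L^2}$, because $j\le k-2$ and $t<1$. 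Together with $\|u\|_{L^2}\le C\|f\|_{L^2}\le Ct^{-k/2}\|f\|_{L^2}$, this yields the claimed bound $Ct^{-k/2}\|f\|_{L^2}$.

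The restriction $k\le 2M$ for $R_{M,t}$ is what makes the induction close. Reducing $k$ by $2$ lowers $M$ to $M-1$, and the hypothesis $k-2\le 2(M-1)$ is preserved. The chain ends at $R_{0,t}=I$ with $k-2\le 0$, so it never asks for derivatives of $I$. For $\varphi$ there is no such restriction, since $\psi$ is again Schwartz. We also need $\partial\Omega\in C^{k}$ for Proposition~\ref{prop:200}; we assume it, as in Theorem~\ref{thm:1}.

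The main obstacle is the scaling. Proposition~\ref{prop:200} is inhomogeneous, and the lower-order terms appear with the wrong power of $t$. This is harmless only because $t\in(0,1)$, which is exactly why the statement is local in time. Some care is also needed to keep the induction hypothesis formulated for all $M'\le M$ and for all Schwartz $\varphi$ simultaneously.
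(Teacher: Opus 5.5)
Your proposal is correct and follows essentially the paper's route: the paper also derives this proposition by combining the spectral $L^2$ bounds with the higher-order elliptic estimate of Proposition~\ref{prop:200}, using $t<1$ to absorb the lower-order terms and coercivity to handle the first derivative. One small correction is needed: Proposition~\ref{prop:200} as stated already assumes $u\in W^{2,2}$, so its index-zero case cannot supply the initial $H^2$ regularity of $R_{M,t}f$ and $\varphi(-t\mathcal{L})f$; instead cite the standard $H^2$ regularity for the strongly elliptic Dirichlet Lam\'e problem (also, $\mathcal{L}R_{M,t}f=-t^{-1}(R_{M-1,t}-R_{M,t})f$, so your formula for $g$ has a sign slip, which is harmless).
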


The proof follows the same procedure as in~\cite{paper:FuIw-2025}.

Next, let us define scaled amalgam spaces on $\Omega$ as follows (see~\cite[Section~4]{paper:IwMaTa-2018}).

\begin{dfn} \label{def:12-002}
Let $1 \leq p, q \leq \infty$ and $t > 0$. Define the space $\ell^p(L^q)_{t}$ by
\begin{equation*}
\ell^p(L^q)_{t} = \ell^p(L^q)_{t}(\Omega)
\coloneqq \left\{ f \in L^q_{\mathrm{loc}}(\overline{\Omega}) \, \mid \,
\| f \|_{\ell^p(L^q)_{t}} < \infty \right\}, 
\end{equation*}
\begin{equation*}
\| f \|_{\ell^p(L^q)_{t}}
\coloneqq
\begin{cases}
\displaystyle  \left( \sum_{n \in \mathbb{Z}^d} \| f \|^p_{L^q(C_{t}(n))} \right)^{\frac{1}{p}}
& \quad \text{for $1 \leq p< \infty $,}
\\
\displaystyle \sup_{n \in \mathbb{Z}^d} \| f \|_{L^q(C_{t}(n))}
& \quad \text{for $ p=\infty $}.
\end{cases}
\end{equation*}
Here, $C_t(n)$ is the hyper cube centered at $t^{1/2} n \in t^{1/2} \mathbb{Z}^d$ with side length $t^{1/2}$:
\begin{equation*}
C_{t}(n) \coloneqq \left\{ x \in \Omega \, \mid \, 
\max_{j = 1, \cdots, d} |x_j - t^{1/2} n_j| \leq \frac{t^{1/2}}{2} \right\}.
\end{equation*}
\end{dfn}

Next, we provide $L^p$--$\ell^p(L^q)_t$ estimates for the resolvent of $-\mathcal{L}$.

\begin{prop} \label{prop:207}
Let $1 \leq p \leq q \leq \infty$, and $\beta$ such that
\begin{equation*}
\beta > \dfrac{d}{2} \left( \dfrac{1}{p} - \dfrac{1}{q} \right)
\end{equation*}
Let $-\mathcal{L}$ denote the Lam\'{e} operator on $L^2(\Omega)$.
Then, $(1 - t \mathcal{L})^{-\beta}$ is extended to a bounded linear operator 
from $L^p(\Omega)$ to $\ell^p(L^q)_t$ and the following holds.
\begin{equation*}
\| (1 + t(-\mathcal{L}))^{-\beta} \|_{\mathcal{B}(L^p(\Omega), \ell^p(L^q)_t)}
\leq C t^{-(d/2)(1/p - 1/q)}
\end{equation*}
for any $0 < t \leq 1$.
\end{prop}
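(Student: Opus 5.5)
The plan follows the approach of \cite[Section~4]{paper:IwMaTa-2018}: first prove the estimate for $p=1$, $q=2$ and for $p=q$, and then interpolate. The basic tool is an $L^2$ off-diagonal (Davies--Gaffney type) estimate for the resolvent. For cubes $C_t(n)$, $C_t(m)$ and $f$ supported in $C_t(m)$, we aim to show
\begin{equation*}
\| (1 - t\mathcal{L})^{-1} f \|_{L^2(C_t(n))} \leq C e^{-c|n-m|} \| f \|_{L^2(C_t(m))}.
\end{equation*}
This follows from Lemma~\ref{lem:12-013} and the coercivity identity by a weighted energy argument. Set $v = (1-t\mathcal{L})^{-1} f$ and test the equation $v - t\mathcal{L}v = f$ against $e^{2\rho\psi}v$, where $\psi$ is Lipschitz with $|\nabla \psi| \le t^{-1/2}$. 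The Lam\'e form is not a pure Laplacian, but its principal part $\mu|\nabla v|^2+(\mu+\lambda)|\operatorname{div}v|^2$ (or the rot/div form) is coercive on $H^1_0$. The commutator terms produced by the weight are bounded by $\rho t^{-1/2}\|e^{\rho\psi}v\|\,\|e^{\rho\psi}\nabla v\|$. Taking $\rho$ small then absorbs them, which gives the exponential decay. Iterating $M$ times gives the same decay for $R_{M,t}$.

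Second, we obtain the $L^1\to\ell^1(L^2)_t$ bound. Decompose $f=\sum_m f\chi_{C_t(m)}$. By Cauchy--Schwarz on a cube of volume $t^{d/2}$, $\|f\|_{L^2(C_t(m))}$ is not controlled by the $L^1$ norm, so we cannot start from $L^2$ data directly. Instead we use duality together with local Sobolev embedding. Take $M > d/4$. Proposition~\ref{prop:12-010} gives $\|\nabla^k R_{M,t}\|_{L^2\to L^2}\le Ct^{-k/2}$ for $k\le 2M$, and combined with the off-diagonal decay this yields $R_{M,t}:\ell^2(L^2)_t\to \ell^\infty(L^\infty)_t$ with norm $Ct^{-d/4}$ after rescaling. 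The rescaling is delicate near $\partial\Omega$; I would use localized Gagliardo--Nirenberg inequalities on cubes intersected with the $C^2$ domain, for which uniform constants hold for $t\le1$. By self-adjointness, the adjoint gives $L^1\to \ell^2(L^2)_t$ with norm $Ct^{-d/4}$. Summing the exponentially decaying contributions over $n$ upgrades this to $L^1\to\ell^1(L^2)_t$ with norm $Ct^{-d/4}$.

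The same argument with $\ell^\infty(L^\infty)$ gives the endpoint estimates, in particular $L^1\to\ell^1(L^\infty)$ with norm $t^{-d/2}$, provided $2M>d$. For $p=q$ the operator is bounded on $\ell^p(L^p)_t=L^p$ by interpolating the $\ell^1(L^1)$ and $\ell^\infty(L^\infty)$ endpoints, both of which come from the off-diagonal kernel bounds. Riesz--Thorin interpolation between these, plus the embeddings $\ell^p(L^{q_1})_t\subset \ell^p(L^{q_2})_t$ with factor $t^{(d/2)(1/q_2-1/q_1)}$ on cubes of side $t^{1/2}$, gives the claim for integer $\beta$ that is large enough. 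To reach any $\beta>\tfrac d2(1/p-1/q)$, I would use complex interpolation in $\beta$ via the analytic family $(1-t\mathcal{L})^{-z}$. Its imaginary powers are controlled by the same off-diagonal estimate with polynomial growth in $\operatorname{Im}z$. Stein interpolation then fills in the sharp range.

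The main obstacle is the off-diagonal estimate for the \emph{system}: the Lam\'e operator has no maximum principle and no Gaussian bounds are available for free. So the whole argument must rest on $L^2$ weighted energy estimates and the elliptic regularity of Proposition~\ref{prop:200}, including near the boundary. I would first try to establish the exponential weighted estimate with the $\operatorname{rot}/\operatorname{div}$ coercive form, checking carefully that the commutator terms involve only first derivatives of the weight.
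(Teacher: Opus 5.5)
There are two genuine gaps in the proposal, one at the passage from $L^2$ decay to the mixed norm and one at the sharp range of $\beta$.

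\textbf{Off-diagonal decay in the mixed norm.} The $L^2$ Combes--Thomas bound you single out as the main obstacle is routine. Since $e^{\rho\psi}v\in H^1_0(\Omega)$, the rot/div coercivity applies even when $\mu+\lambda<0$. The gap is in the step after it. Because $\ell^2(L^2)_t=L^2(\Omega)$ and $\ell^\infty(L^\infty)_t=L^\infty(\Omega)$, Proposition~\ref{prop:12-010} plus Sobolev embedding gives only the global bound $\|R_{M,t}\|_{L^2\to L^\infty}\le Ct^{-d/4}$, i.e.\ $\|R_{M,t}\|_{L^1\to L^2}\le Ct^{-d/4}$. This bound has no spatial decay, while your decay is purely an $L^2\to L^2$ statement. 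The two facts do not combine into $\|\chi_{C_t(n)}R_{M,t}\chi_{C_t(m)}\|_{L^1\to L^2}\le Ct^{-d/4}e^{-c|n-m|}$, which is exactly what ``summing the exponentially decaying contributions'' requires. Without it, summing over the $\sim t^{-d/2}$ cubes costs an extra factor $t^{-d/4}$.

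Local Gagliardo--Nirenberg inequalities on $C_t(n)\cap\Omega$ do not close this. That set can be an arbitrarily thin sliver, so no uniform constant exists (test with a constant function). In any case you would need \emph{localized} bounds on $\nabla^k R_{M,t}(\chi_{C_t(m)}f)$, $k>d/2$, decaying in $|n-m|$, whereas Proposition~\ref{prop:12-010} is global. What is missing is a weighted higher-order estimate such as $\|e^{\rho\psi}R_{M,t}e^{-\rho\psi}g\|_{H^{2M}}\le Ct^{-M}\|g\|_{L^2}$, uniformly over bounded smooth $\psi$ with $|\nabla^j\psi|\lesssim t^{-j/2}$. You could obtain it from a Neumann series for the conjugated resolvent around $R_{1,t}$ together with Proposition~\ref{prop:200}, or use its polynomial-weight analogue via the commutator calculus of Proposition~\ref{prop:12-004}. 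Either must then be followed by a \emph{global} Gagliardo--Nirenberg inequality on $\Omega$. This is the substantive part of the argument. It is essentially Davies' perturbation method, which is also how one obtains the Gaussian bound of Lemma~\ref{lemma:021} on which the paper's proof rests.

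\textbf{The sharp range of $\beta$.} The imaginary powers $(1-t\mathcal{L})^{-i\gamma}$ are singular-integral-type operators, with kernel of size $|x-y|^{-d}$ near the diagonal. They are therefore unbounded on $L^1$, and hence also from $L^1$ to $\ell^1(L^q)_t$, since H\"older on cubes gives $\|g\|_{L^1}\le t^{\frac d2(1-\frac1q)}\|g\|_{\ell^1(L^q)_t}$. The points $(\frac1p,\frac1q)=(1,1)$ and $(1,0)$ are vertices of the admissible triangle, so a Stein interpolation landing there must use that same $(p,q)$ on both boundary lines. Consequently, from integer exponents you cannot obtain, for instance, $L^1$-boundedness of $(1-t\mathcal{L})^{-\beta}$ for $0<\beta<1$. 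Nor can you obtain the $L^1\to\ell^1(L^\infty)_t$ bound for $\frac d2<\beta<\lfloor\frac d2\rfloor+1$.

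The paper gets the sharp range at once from the Gaussian bound and the subordination formula $(1-t\mathcal{L})^{-\beta}=\Gamma(\beta)^{-1}\int_0^\infty s^{\beta-1}e^{-s}e^{st\mathcal{L}}\,ds$. The $L^p\to\ell^p(L^q)_t$ norm of $e^{st\mathcal{L}}$ is $O((st)^{-\frac d2(\frac1p-\frac1q)})$ for $s\le1$ and grows at most polynomially in $s$ for $s\ge1$. The integral therefore converges exactly when $\beta>\frac d2(\frac1p-\frac1q)$, with $p=1$ and $q=\infty$ included. In your framework the sharp range must likewise come from such a representation formula fed with scale-uniform bounds, not from complex interpolation in $\beta$.
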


As in~\cite[Proposition~4.1]{paper:IwMaTa-2018}, the proof of Proposition~\ref{prop:207} can be carried out 
by combining the integral representation of the resolvent with integral kernel estimates in scaled amalgam spaces.
However, unlike the case of Laplacian, a simple zero extension does not allow us to estimate the integral kernel on a bounded domain.
Therefore, we mention the following lemma, which provides pointwise estimates for the integral kernel.

\begin{lem} \label{lemma:021}
Let $\Omega$ be a smooth bounded domain of $\mathbb{R}^d, d \geq 1$.
Then, the semigroup $e^{t\mathcal{L}}$ on $L^2(\Omega)$ is given by an integral kernel $p(t, x, y)$ 
which satisfies the Gaussian upper bound.
\begin{equation*}
|p(t, x, y)| \leq C t^{-d/2} \exp{\left( -\dfrac{|x-y|^2}{ct} \right)} 
\quad \text{ for } \, t > 0, \textup{a.e.} \, (x, y) \in \Omega \times \Omega, 
\end{equation*}
where $c > 0$ is a constant.
\end{lem}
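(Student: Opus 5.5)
The plan is to deduce the Gaussian bound from pointwise bounds for the kernel, obtained by combining $L^2$ estimates with Sobolev embedding and the higher-order elliptic regularity of Proposition~\ref{prop:200}. The exponential factor will then be produced by a Davies-type perturbation argument. A maximum principle or comparison argument is not available for systems, so this functional-analytic route is the natural one.

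\textbf{Step 1: on-diagonal bound.} For $0<t\le 1$, I will prove
\[
\|e^{t\mathcal{L}}\|_{\mathcal{B}(L^2,L^\infty)} \le C t^{-d/4}.
\]
By Proposition~\ref{prop:12-010} with $\varphi(\lambda)=e^{-\lambda}$, we have $\|\nabla^k e^{t\mathcal{L}} f\|_{L^2}\le Ct^{-k/2}\|f\|_{L^2}$ for every $k$. Take $k>d/2$ and use the scaled Gagliardo--Nirenberg inequality on the bounded $C^{k+2}$ domain:
\[
\|v\|_{L^\infty}\le C\|v\|_{L^2}^{1-d/(2k)}\|v\|_{H^k}^{d/(2k)} .
\]
This gives the stated bound. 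Since $e^{t\mathcal{L}}$ is self-adjoint, duality gives the same bound from $L^1$ to $L^2$, and composing the two yields $L^1\to L^\infty$ with norm $Ct^{-d/2}$. Therefore a kernel exists and satisfies $|p(t,x,y)|\le Ct^{-d/2}$. For $t\ge1$, the spectral gap (by Poincaré, $-\mathcal{L}\ge c_0>0$) gives exponential decay, which is even better.

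\textbf{Step 2: Davies perturbation.} For bounded Lipschitz $\psi$ with $|\nabla\psi|\le 1$ and $\rho\in\mathbb{R}$, consider the twisted semigroup
\[
e^{\rho\psi}e^{t\mathcal{L}}e^{-\rho\psi}.
\]
Its generator corresponds to the form
\[
a_\rho(u,u)=a(e^{-\rho\psi}u,e^{\rho\psi}u),
\]
where $a(u,v)=\mu\int \nabla u:\nabla v+(\mu+\lambda)\int \operatorname{div}u\,\operatorname{div}v$. Expanding, the cross terms are bounded by $C\rho|u|\,|\nabla u|$. Using the coercivity in the proof of Lemma~\ref{lem:12-013} and Young's inequality,
\[
\operatorname{Re} a_\rho(u,u)\ge c\|\nabla u\|_2^2 - C\rho^2\|u\|_2^2 .
\]
Hence $\|e^{\rho\psi}e^{t\mathcal{L}}e^{-\rho\psi}\|_{2\to2}\le e^{C\rho^2 t}$.

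\textbf{Step 3: the main obstacle.} The hard part is to upgrade Step 2 to the weighted $L^1\to L^\infty$ bound
\[
\|e^{\rho\psi}e^{t\mathcal{L}}e^{-\rho\psi}\|_{1\to\infty}\le Ct^{-d/2}e^{C\rho^2t}.
\]
The Sobolev argument of Step 1 must now be run for the twisted operator. I plan to avoid needing higher-regularity elliptic estimates for the perturbed system by working with the form $a_\rho$ only. The route is a Nash-type inequality combined with $L^2\to L^\infty$ bounds obtained by iteration in the style of Davies' book, using the $\Gamma$-type ellipticity above together with Gagliardo--Nirenberg. If that proves delicate, I would instead use the known estimate
\[
\|\nabla^k e^{\rho\psi}e^{t\mathcal{L}}e^{-\rho\psi}f\|_{2}\le C t^{-k/2}e^{C\rho^2t}\|f\|_2
\]
for $k\le d/2+1$. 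This follows by commuting $e^{\rho\psi}$ through derivatives, with $\psi$ chosen smooth and with bounded derivatives, and then applying Proposition~\ref{prop:200} together with the analyticity of the semigroup.

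\textbf{Step 4: conclusion.} The weighted estimate gives
\[
|p(t,x,y)|\le Ct^{-d/2}e^{C\rho^2t-\rho(\psi(x)-\psi(y))}.
\]
Choose $\psi(z)=\pm\,(x-y)\cdot z/|x-y|$, suitably truncated and smoothed, and set $\rho=|x-y|/(2Ct)$. This yields the Gaussian factor $\exp(-|x-y|^2/(ct))$ and completes the proof.
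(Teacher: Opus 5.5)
There is a gap at exactly the step you flag as the main obstacle. Your Steps 1, 2 and 4 are fine, and the overall scheme (on-diagonal bound, Davies twisting, weighted $L^1\to L^\infty$ bound, optimisation in $\rho$) is the standard Davies-perturbation scheme; the paper gives no argument beyond citing Ouhabaz's book. For a system, however, all the content of the lemma lies in Step 3, and your primary plan there would not work. A Nash inequality turns the coercivity of $a_\rho$ into an $L^1\to L^2$ bound only if you already control $\|e^{\rho\psi}e^{t\mathcal{L}}e^{-\rho\psi}\|_{1\to 1}$. Davies' log-Sobolev iteration needs $L^p$ quasi-contractivity of the twisted semigroup for all $p$. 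Both rest on the sub-Markovian (Beurling--Deny) structure of real scalar operators, which the Lam\'e system lacks, since there is no maximum principle for systems. In addition, Gagliardo--Nirenberg with only $H^1$ control reaches $L^\infty$ only when $d=1$. More basically, an argument using only $a_\rho$ and its ellipticity would apply verbatim to second-order systems with bounded measurable coefficients. There $L^\infty$-type regularity fails for $d\ge 3$ (De Giorgi's counterexample), so a genuine regularity input is indispensable.

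Your fallback is the right route, but as written it does not close. ``Commuting $e^{\rho\psi}$ through derivatives'' produces terms $e^{\rho\psi}\nabla^j e^{t\mathcal{L}}e^{-\rho\psi}f$, i.e.\ weighted bounds for the untwisted semigroup. Estimating these with the unweighted Proposition~\ref{prop:12-010} costs a factor $e^{\rho\operatorname{osc}_\Omega\psi}$, which destroys the Gaussian. You should work with the twisted generator instead.

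Since $\Omega$ is bounded, no truncation is needed: take $\psi(z)=e\cdot z$ with $|e|=1$. Then $A_\rho:=e^{\rho\psi}(-\mathcal{L})e^{-\rho\psi}=-\mathcal{L}+\rho B_1+\rho^2 B_0$, with $B_1$ a constant-coefficient first-order operator and $B_0$ a constant matrix (compare \eqref{eq:12-007}). Moreover $D(A_\rho)=D(\mathcal{L})$ and $e^{\rho\psi}e^{t\mathcal{L}}e^{-\rho\psi}=e^{-tA_\rho}$.

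Your Step 2 computation in fact gives $\operatorname{Re}a_\rho(u,u)=a(u,u)-\rho^2\int_\Omega(\mu|u|^2+(\mu+\lambda)|e\cdot u|^2)\,dx$ and $|\operatorname{Im}a_\rho(u,u)|\le C\rho\|u\|_2\|\nabla u\|_2$. Hence $A_\rho+C\rho^2$ is sectorial with an angle independent of $\rho$, and $\|A_\rho^j e^{-tA_\rho}\|_{2\to2}\le Ct^{-j}(1+\rho^2 t)^j e^{C\rho^2 t}$.

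Next, apply Proposition~\ref{prop:200} to $-\mathcal{L}u=A_\rho u-\rho B_1u-\rho^2B_0u$. Absorb the lower-order terms using $\rho^i\|u\|_{H^{2m-i}}\le\varepsilon\|u\|_{H^{2m}}+C_\varepsilon\rho^{2m}\|u\|_2$. Induction on $m$ then gives $\|u\|_{H^{2m}}\le C\sum_{j=0}^{m}(1+\rho^2)^{m-j}\|A_\rho^j u\|_2$. Combining the two bounds yields $\|e^{-tA_\rho}\|_{2\to H^{2m}}\le Ct^{-m}e^{C\rho^2t}$ for $0<t\le 1$.

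To finish Step 3, use Gagliardo--Nirenberg with $2m>d/2$, the identity $(e^{-tA_\rho})^*=e^{-tA_{-\rho}}$, and composition. This gives the weighted $L^1\to L^\infty$ bound $Ct^{-d/2}e^{C\rho^2t}$. With Step 3 done this way, your Step 4 completes the proof. Since $\psi$ depends on $(x,y)$, first restrict $e$ and $\rho$ to countable dense sets so that the a.e.\ bounds can be combined.
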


\begin{pr}
The proof is similar to the argument as in~\cite{book:Ou}
(see Section~6.3 in~\cite{book:Ou} for more details).
\end{pr}


Finally, the following proposition provides several properties concerning commutator estimates.

\begin{dfn} \label{def:12-005}
Let $x \in \mathbb{R}^d, n \in \mathbb{Z}^d$ and $t > 0$.
Let $k \in \mathbb{Z}_{\geq 0}$.
For an operator $L$ on $L^2(\Omega)$, we define the commutator $\operatorname{Ad}^k(L)$ as follows.
\begin{equation*}
\begin{aligned}
&\operatorname{Ad}^0(L) = L, 
\\
&\operatorname{Ad}^k(L) = \operatorname{Ad}^{k-1}( (x_j - t^{1/2} n_j) L - L (x_j - t^{1/2} n_j) ), 
\quad k \geq 1, 
\end{aligned}
\end{equation*}
where $x_j, n_j$ denote the $j$-th element of $x, n$ respectively.
\end{dfn}

\begin{prop}[{\cite{paper:JeNa-1995}}] \label{prop:12-004}
Let $M \in \mathbb{N}, t > 0$ and let $-\mathcal{L}$ denote the Lam\'{e} operator on $L^2(\Omega)$.
Then, for higher order resolvent $R_{M, t} = (1 - t \mathcal{L})^{-M}$, 
a sequence of operators $\{ \operatorname{Ad}^{\ell}(R_{M, t}) \}_{\ell \geq 0}$ satisfies the following.
\begin{equation*}
\begin{aligned}
&\operatorname{Ad}^0(R_{M, t}) = R_{M, t}, 
\\
&\operatorname{Ad}^\ell ( R_{M, t} )
= \sum_{k = 0}^{\ell-1} \sum_{m_1 + m_2 = M + 1}
\binom{\ell}{k} \operatorname{Ad}^k ( R_{m_1, t} ) \operatorname{Ad}^{\ell-k} ( - t \mathcal{L} ) R_{m_2, t}
, \quad \ell \geq 1, 
\end{aligned}
\end{equation*}
where $m_1$ and $m_2$ in the above sum satisﬁes $m_1, m_2 \geq 1$.

Moreover, for $k \in \mathbb{Z}_{\geq 0}$, it holds
\begin{equation*}
\begin{aligned}
&\quad \mathrm{Ad}^{k+1}(e^{-itR_{M, t}}) 
\\
&= - i \int^{\tau}_{0} \sum_{k_1 + k_2 + k_3 = k}
\frac{k!}{k_1! k_2! k_3!} \mathrm{Ad}^{k_1}(e^{-isR_{M, t}}) \mathrm{Ad}^{k_2+1}(R_{M, t}) \mathrm{Ad}^{k_3}(e^{-i(\tau-s)R_{M, t}}) \, {\mathrm d}s.
\end{aligned}
\end{equation*}
where $k_1, k_2$ and $k_3$ in the above sum satisﬁes $k_1, k_2, k_3 \geq 0$.
\end{prop}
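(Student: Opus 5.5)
Both identities in Proposition~\ref{prop:12-004} are algebraic and follow from the Leibniz rule for the commutator; the only analytic input is a domain check. Write $X := x_j - t^{1/2} n_j$ and $\operatorname{Ad}(A) := XA - AX$, so that $\operatorname{Ad}^k$ is the $k$-fold iterate. Since $\Omega$ is bounded, $X$ is a bounded multiplication operator on $L^2(\Omega)$.

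\textbf{Domain check.} If $u \in \mathcal{D}(\mathcal{L})$, then $Xu \in H^1_0(\Omega)$, and
\[
\mathcal{L}(Xu) = X\mathcal{L}u + (\text{first-order terms in } u) \in L^2(\Omega),
\]
so $Xu \in \mathcal{D}(\mathcal{L})$. Hence $X$ maps $\mathcal{D}(\mathcal{L})$ into itself and $\operatorname{Ad}(-t\mathcal{L})$ is well defined on $\mathcal{D}(\mathcal{L})$. Computing explicitly, $\operatorname{Ad}(-t\mathcal{L})$ is $t$ times a first-order constant-coefficient operator, $\operatorname{Ad}^2(-t\mathcal{L})$ is $t$ times a constant matrix, and $\operatorname{Ad}^k(-t\mathcal{L}) = 0$ for $k \ge 3$. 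In particular every composition $\operatorname{Ad}^k(R_{m_1,t})\operatorname{Ad}^{\ell-k}(-t\mathcal{L})R_{m_2,t}$ is meaningful, because $R_{m_2,t}$ maps into $\mathcal{D}(\mathcal{L}) \subset H^1_0(\Omega)$. The other tool is the iterated Leibniz rule
\[
\operatorname{Ad}^\ell(AB) = \sum_{k=0}^{\ell} \binom{\ell}{k} \operatorname{Ad}^k(A)\operatorname{Ad}^{\ell-k}(B),
\]
which follows from $\operatorname{Ad}(AB) = \operatorname{Ad}(A)B + A\operatorname{Ad}(B)$ by induction on $\ell$.

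\textbf{The case $\ell = 1$.} Start from $R_{1,t}(1-t\mathcal{L}) = I$ on $\mathcal{D}(\mathcal{L})$. Applying $\operatorname{Ad}$ and multiplying on the right by $R_{1,t}$ gives
\[
\operatorname{Ad}(R_{1,t}) = -R_{1,t}\operatorname{Ad}(1-t\mathcal{L})R_{1,t} = \pm R_{1,t}\operatorname{Ad}(-t\mathcal{L})R_{1,t},
\]
with the sign fixed by the convention for $\operatorname{Ad}$; I will follow the sign through consistently. For general $M$, write $R_{M,t} = R_{1,t}^M$ and expand with the Leibniz rule:
\[
\operatorname{Ad}(R_{M,t}) = \sum_{a+b=M-1} R_{1,t}^{a}\operatorname{Ad}(R_{1,t})R_{1,t}^{b}.
\]
Substituting the formula for $\operatorname{Ad}(R_{1,t})$ and regrouping $R_{1,t}^{a+1} = R_{a+1,t}$, $R_{1,t}^{b+1} = R_{b+1,t}$ yields $\sum_{m_1+m_2=M+1} R_{m_1,t}\operatorname{Ad}(-t\mathcal{L})R_{m_2,t}$ with $m_1, m_2 \ge 1$.

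\textbf{Induction on $\ell$.} To reach the stated form, I would not iterate the $\ell = 1$ formula directly. Instead, apply $\operatorname{Ad}^\ell$ to the identity $R_{M,t}(1-t\mathcal{L})^M = I$, or equivalently to $R_{M,t} = R_{M-1,t}R_{1,t}$ together with $\operatorname{Ad}^\ell(R_{1,t}(1-t\mathcal{L})) = 0$. The Leibniz rule then gives
\[
\operatorname{Ad}^\ell(R_{1,t})(1-t\mathcal{L}) = -\sum_{k=0}^{\ell-1} \binom{\ell}{k} \operatorname{Ad}^k(R_{1,t})\operatorname{Ad}^{\ell-k}(-t\mathcal{L}) \cdot (\pm 1),
\]
and multiplying on the right by $R_{1,t}$ gives the $M=1$ case. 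For $M \ge 2$, proceed by induction on $M$ using $R_{M,t} = R_{M-1,t}R_{1,t}$ and the Leibniz rule. Collecting the terms so that exactly one factor $\operatorname{Ad}^{\ell-k}(-t\mathcal{L})$ with $\ell - k \ge 1$ appears produces $\sum_{k=0}^{\ell-1}\sum_{m_1+m_2=M+1}\binom{\ell}{k}\operatorname{Ad}^k(R_{m_1,t})\operatorname{Ad}^{\ell-k}(-t\mathcal{L})R_{m_2,t}$.

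I expect the main obstacle to be this combinatorial regrouping, specifically checking that the binomial coefficients and the index pairs $(m_1, m_2)$ match exactly. I would verify it first for $\ell = 2$ and $M = 1, 2$, then make the induction formal.

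\textbf{The formula for $\operatorname{Ad}^{k+1}(e^{-i\tau R_{M,t}})$.} Here $R_{M,t}$ is bounded, so everything is a norm-convergent computation. Set
\[
F(s) = e^{-isR_{M,t}}\, X\, e^{-i(\tau-s)R_{M,t}}.
\]
Then
\[
F'(s) = -i\, e^{-isR_{M,t}}\bigl(R_{M,t}X - XR_{M,t}\bigr)e^{-i(\tau-s)R_{M,t}} = i\, e^{-isR_{M,t}}\operatorname{Ad}(R_{M,t})e^{-i(\tau-s)R_{M,t}}.
\]
Integrating from $0$ to $\tau$ and using $F(\tau) - F(0) = -\operatorname{Ad}(e^{-i\tau R_{M,t}})$ gives the Duhamel formula
\[
\operatorname{Ad}(e^{-i\tau R_{M,t}}) = -i\int_0^\tau e^{-isR_{M,t}}\operatorname{Ad}(R_{M,t})e^{-i(\tau-s)R_{M,t}}\,\mathrm{d}s,
\]
which is the case $k = 0$. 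Finally, apply $\operatorname{Ad}^k$ under the integral, which commutes with it because $X$ is bounded, and use the trinomial Leibniz rule for a product of three operators. This yields the stated sum with coefficients $k!/(k_1!k_2!k_3!)$ over $k_1 + k_2 + k_3 = k$.
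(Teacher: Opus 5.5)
Your Duhamel computation for $\operatorname{Ad}^{k+1}(e^{-i\tau R_{M,t}})$ is correct and complete, as is your domain check; you rightly read the $t$ on the left-hand side as a misprint for $\tau$. The paper itself gives no argument beyond the citation of Jensen--Nakamura, so the problems below are all in your treatment of the first identity.

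The first problem is the sign, which is \emph{not} a matter of convention. Both $A\mapsto XA-AX$ and $A\mapsto AX-XA$ are derivations, so in either case $\operatorname{Ad}(R_{1,t})(1-t\mathcal{L})+R_{1,t}\operatorname{Ad}(1-t\mathcal{L})=0$ on $\mathcal{D}(\mathcal{L})$. Moreover $\operatorname{Ad}(1-t\mathcal{L})=\operatorname{Ad}(-t\mathcal{L})$ exactly. Hence $\operatorname{Ad}(R_{1,t})=-R_{1,t}\operatorname{Ad}(-t\mathcal{L})R_{1,t}$, while the statement with $M=\ell=1$ asserts the same expression with a plus sign. The two cannot both hold: $R_{1,t}$ is injective with range $\mathcal{D}(\mathcal{L})\supset C_c^\infty(\Omega)$, and $\operatorname{Ad}(-t\mathcal{L})$ is a nonzero first-order operator there. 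What your method proves, and what is true, is
\[
\operatorname{Ad}^\ell(R_{M,t})=-\sum_{k=0}^{\ell-1}\sum_{m_1+m_2=M+1}\binom{\ell}{k}\operatorname{Ad}^k(R_{m_1,t})\operatorname{Ad}^{\ell-k}(-t\mathcal{L})R_{m_2,t},
\]
i.e.\ the printed formula with $\operatorname{Ad}^{\ell-k}(t\mathcal{L})$ in place of $\operatorname{Ad}^{\ell-k}(-t\mathcal{L})$. The $\pm$ and the stray factor $(\pm1)$ in your write-up conceal this, and your claim that the computation yields the printed sum is false. You should state the corrected identity instead. Nothing is lost for the paper, since Section~3 only uses operator norms of these terms.

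The second problem is that the case of general $\ell$ and $M$ is announced rather than proved. You identify the regrouping as the main obstacle and leave it open. Also, the identity $R_{M,t}(1-t\mathcal{L})^M=I$ you propose is a poor starting point, since $\operatorname{Ad}^j((1-t\mathcal{L})^M)$ has no simple form. The regrouping disappears if you apply $\operatorname{Ad}^\ell$, $\ell\ge1$, to $R_{M,t}(1-t\mathcal{L})=R_{M-1,t}$ on $\mathcal{D}(\mathcal{L})$, with $R_{0,t}=I$ so that $\operatorname{Ad}^\ell(R_{0,t})=0$, and then compose with $R_{1,t}$ on the right:
\[
\operatorname{Ad}^\ell(R_{M,t})=\operatorname{Ad}^\ell(R_{M-1,t})R_{1,t}-\sum_{k=0}^{\ell-1}\binom{\ell}{k}\operatorname{Ad}^k(R_{M,t})\operatorname{Ad}^{\ell-k}(-t\mathcal{L})R_{1,t}.
\]
For $M=1$ this is already the formula. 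For $M\ge2$, insert the induction hypothesis at the same $\ell$ into the first term and use $R_{m_2,t}R_{1,t}=R_{m_2+1,t}$. This produces exactly the pairs $(m_1,m_2)$ with $m_2\ge2$, and the remaining sum supplies the pair $(M,1)$. No induction on $\ell$ is needed, because the target keeps the factors $\operatorname{Ad}^k(R_{m_1,t})$ unexpanded.

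Your alternative route through $R_{M,t}=R_{M-1,t}R_{1,t}$ reaches the same recursion only under a specific procedure. You must expand just the factors $\operatorname{Ad}^{\ell-j}(R_{1,t})$ by the $M=1$ formula, then resum with $\binom{\ell}{j}\binom{\ell-j}{i}=\binom{\ell}{i+j}\binom{i+j}{j}$ and the Leibniz rule read backwards. ``Collecting terms'', as written, does not say this.
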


\subsection{Proposition for the proof of Theorem~\ref{thm:1} when $p = \infty$}

We consider the estimate for the following quantity.
\begin{equation} \label{eq:20260323-5}
N_\ell[u](t, x) \coloneqq \sum_{s = 0}^{\ell} t^{s/2} |\nabla^s u(t, x)|.
\end{equation}

We give an $L^\infty$ estimate of $N_{\ell}[u]$, which is known for the Stokes equations for $\ell = 2$ (see~\cite{paper:AbGi-2013}).
\begin{lem} \label{lem:220}
Let $\Omega$ be a bounded domain with $C^{\ell + 1} boundary$.
Assume that $u_0 \in C_c^\infty(\Omega)$, and define $u(t) \coloneqq e^{t \mathcal{L}} u_0$.
Then, 
\begin{equation*}
\sup_{0 < t < 1}{\| N_{\ell}[u](t) \|_{L^\infty} }< \infty.
\end{equation*}
\end{lem}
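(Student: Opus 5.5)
The statement is only qualitative: for fixed smooth, compactly supported data $u_0$, the quantity $N_\ell[u](t,x)$ stays bounded uniformly in $t\in(0,1)$. It does not claim $\|N_\ell[u](t)\|_{L^\infty}\le C\|u_0\|_{L^\infty}$. So I would not aim for any scale-invariant bound. Instead I would use the smoothness of $u_0$ to show that every derivative $\nabla^s u(t)$, $0\le s\le \ell$, is bounded in $L^\infty(\Omega)$ uniformly for $t\in(0,1)$. The factors $t^{s/2}\le 1$ can then simply be discarded.

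\textbf{Step 1: uniform bounds on powers of $\mathcal{L}$.} Since $u_0\in C_c^\infty(\Omega)$, every power $(-\mathcal{L})^m u_0=(-(\mu\Delta+(\mu+\lambda)\nabla\operatorname{div}))^m u_0$ is again in $C_c^\infty(\Omega)$. In particular $u_0\in\mathcal{D}((-\mathcal{L})^m)$ for all $m$, and
\[
(-\mathcal{L})^m u(t)=e^{t\mathcal{L}}(-\mathcal{L})^m u_0 .
\]
By the spectral theorem (or contractivity of the semigroup on $L^2$),
\[
\|(-\mathcal{L})^m u(t)\|_{L^2}\le \|(-\mathcal{L})^m u_0\|_{L^2}<\infty
\]
uniformly in $t>0$. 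To work in $L^p$ with $p$ large, I would use the Gaussian upper bound of Lemma~\ref{lemma:021}. It gives $\|e^{t\mathcal{L}}g\|_{L^p}\le C\|g\|_{L^p}$ for all $1\le p\le\infty$, uniformly in $t$. Applying this with $g=(-\mathcal{L})^m u_0$ yields
\[
\sup_{0<t<1}\|(-\mathcal{L})^m u(t)\|_{L^p}<\infty \quad\text{for every } p\in[2,\infty).
\]

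\textbf{Step 2: elliptic bootstrap.} Fix $p>d$ and $m\ge0$. I apply Proposition~\ref{prop:200} to the problem
\[
\mathcal{L}\big((-\mathcal{L})^m u(t)\big)=-(-\mathcal{L})^{m+1}u(t),\qquad (-\mathcal{L})^m u(t)=0 \text{ on } \partial\Omega,
\]
and iterate downward in $m$. This gives $u(t)\in W^{k+2,p}(\Omega)$ with
\[
\|u(t)\|_{W^{k+2,p}}\le C\Big(\|u(t)\|_{L^p}+\sum_{m\le (k+2)/2}\|(-\mathcal{L})^m u(t)\|_{L^p}\Big),
\]
uniformly in $t\in(0,1)$, where each $L^p$ norm on the right is controlled by Step 1.

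\textbf{Step 3: passage to $L^\infty$.} I choose $k$ so that $k+2>\ell+d/p$. Sobolev embedding $W^{k+2,p}\subset C^{\ell}(\overline\Omega)$ then bounds $\sup_t\|\nabla^s u(t)\|_{L^\infty}$ for all $s\le\ell$, and summing over $s$ gives the bound on $N_\ell[u]$.

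\textbf{Main obstacle.} The difficulty is regularity bookkeeping. The lemma assumes only a $C^{\ell+1}$ boundary, while Proposition~\ref{prop:200} with $k+2=\ell+1$ already requires a $C^{\ell+1}$ boundary and then only yields $W^{\ell+1,p}$. That is still enough: taking $p>d$ gives $W^{\ell+1,p}\subset C^{\ell}$, which is exactly what we need, so I would set $k=\ell-1$ (for $\ell\ge1$) rather than take $k$ larger. It must also be checked that each intermediate function $(-\mathcal{L})^m u(t)$ lies in $W^{2,p}$, so that Proposition~\ref{prop:200} applies. I would justify this by induction, starting from the $L^2$ domain regularity. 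Alternatively, one can first approximate the data, or use the $L^p$-theory of the analytic semigroup for $1<p<\infty$ mentioned in the introduction, and then remove the approximation.
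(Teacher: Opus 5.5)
Your proposal is correct and is essentially the paper's argument. Both use $L^r$-boundedness of $e^{t\mathcal{L}}$ applied to $\mathcal{L}^m u_0$, then downward iteration of Proposition~\ref{prop:200}, then $W^{\ell+1,r}\subset C^{\ell}(\overline{\Omega})$ for $r>d$; the paper packages this as a uniform bound on $\sum_{s\le\ell} t^{s/2}\|\nabla^s u(t)\|_{W^{1,r}}$ by $\|\mathcal{L}^m u_0\|_{L^r}$, $m\le\lceil(\ell+2)/2\rceil$.

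One small slip: when $k+2=2j+1$ is odd, the last bootstrap step controls $\|(-\mathcal{L})^j u\|_{W^{1,p}}\le\|(-\mathcal{L})^j u\|_{W^{2,p}}$ only through $\|(-\mathcal{L})^{j+1}u\|_{L^p}$. So your sum must run over $m\le\lceil (k+2)/2\rceil$, not $m\le (k+2)/2$; this is harmless, since Step~1 bounds every power.
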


\begin{proof}
Let $d < r < \infty$. 
We will show that there exists $C = C(\Omega, r) > 0$ such that
\begin{equation} \label{eq:G}
\begin{aligned}
\sup_{0 < t < 1}{ \sum_{s = 0}^{\ell} t^{s/2} \| \nabla^s u(t, x) \|_{W^{1, r}} }
\leq C ( \| u_0 \|_{L^r}, \| \mathcal{L} u_0 \|_{L^r}, \ldots, \| \mathcal{L}^{\lceil (\ell+2)/2 \rceil} u_0 \|_{L^r} ),
\end{aligned}
\end{equation}
where $\lceil \alpha \rceil$ is the smallest integer greater than or equals to $\alpha$, for $\alpha \in \mathbb{R}$.

By higher order elliptic estimates (Proposition~\ref{prop:200}) and $L^r$ boundedness of semigroup, 
we obtain for even number $s = 2j, j \in \mathbb{Z}$, 
\begin{equation*}
\begin{aligned}
\| \nabla^s u(t, x) \|_{L^r}
&\leq C ( \| u \|_{L^r} + \| \mathcal{L} u \|_{W^{s-2, r}} )
\\
&\leq C (\| u \|_{L^r} + \| \mathcal{L} u \|_{L^r} + \| \mathcal{L}^2 u \|_{W^{s-4, r}} )
\\
&\vdots
\\
&\leq C ( \| u \|_{L^r} + \cdots + \| \mathcal{L}^j u \|_{L^r} )
\\
&\leq C (\| u_0 \|_{L^r} + \cdots + \| \mathcal{L}^j u_0 \|_{L^r} ).
\end{aligned}
\end{equation*}
Similarly, when $s = 2j+1, j \in \mathbb{Z}$ is odd, the following estimate holds.
\begin{equation*}
\begin{aligned}
\| \nabla^s u(t, x) \|_{L^r}
\leq C (\| u_0 \|_{L^r} + \cdots + \| \mathcal{L}^{j+1} u_0 \|_{L^r} ).
\end{aligned}
\end{equation*}
Therefore, we prove \eqref{eq:G}.
\end{proof}

We introduce the following proposition on the uniqueness for the parabolic Lam\'{e} system on the whole space and half space, 
which is used in the proof of Theorem~\ref{thm:1}.
The proof is similar to that for the heat equation on the whole space in~\cite{book:GiGiSa}.

\begin{prop} \label{prop:12-015}
\noindent(i)
Let $\Omega = \mathbb{R}^d$. 
Suppose that $u = u(t, x) \in L_{\text{loc}}^1((0, T) \times \Omega)$ satisfies the following.
\begin{equation} \tag{a}
\sup_{0 < t < T} \| u(t) \|_{L^\infty} < \infty
\end{equation}
and
\begin{equation} \tag{b}
\int_0^T \int_{\Omega} u \cdot (\partial_t \phi + \mathcal{L} \phi) {\mathrm d}x {\mathrm d}t = 0
\end{equation}
for any $C_c^\infty([0, T) \times \mathbb{R}^d)$. 
Then, $u$ is $0$ almost everywhere.

\vspace{3mm}

\noindent(ii)
Let $\Omega = \mathbb{R}^d_+$. 
Suppose that $u = u(t, x) \in L_{\text{loc}}^1((0, T) \times \Omega)$ satisfies the following.
\begin{equation*}
\int_0^T \int_{\Omega} u \cdot (\partial_t \phi + \mathcal{L} \phi) {\mathrm d}x {\mathrm d}t = 0
\end{equation*}
for any
\begin{equation*}
\phi \in C([0, T] ; W^{1,1}_0(\Omega) \cap W^{2, 1}(\Omega) \cap C(\Omega)) 
\cap C^1([0, T] ; L^1(\Omega))
\end{equation*}
with $| \nabla \phi(x)| \leq C(1 + |x|^2)^{-d/2}$.
Then, $u$ is $0$ almost everywhere.
\end{prop}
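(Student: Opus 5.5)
The plan is to follow the classical Liouville-type argument for the heat equation in \cite{book:GiGiSa}, based on duality together with a backward adjoint problem. The operator $\mathcal{L}=\mu\Delta+(\mu+\lambda)\nabla\operatorname{div}$ is formally self-adjoint with constant coefficients. Hence, for any $\psi\in C_c^\infty((0,T)\times\Omega)$, we solve the backward problem
\begin{equation*}
\partial_t\phi+\mathcal{L}\phi=\psi \ \text{ in } (0,T)\times\Omega,\qquad \phi(T)=0,
\end{equation*}
with $\phi=0$ on $\partial\Omega$ in case (ii). If $\phi$ is admissible as a test function in (b), then $\int_0^T\int_\Omega u\cdot\psi\,\mathrm{d}x\,\mathrm{d}t=0$ for every such $\psi$, and therefore $u=0$ a.e. The whole proof thus reduces to constructing $\phi$ and checking that it lies in the admissible class, or can be approximated within it.

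In case (i) we write $\phi$ by Duhamel's formula, $\phi(t)=-\int_t^T K(s-t)*\psi(s)\,\mathrm{d}s$. Here $K$ is the explicit Lam\'e kernel from the introduction, namely $G_{\mu t}\delta_{ij}+\partial_i\partial_j\int_{\mu t}^{(2\mu+\lambda)t}G_s\,\mathrm{d}s$. The resulting $\phi$ is smooth, and for $t>0$ it satisfies $\phi(t)\to 0$ as $t\to T$ in the required sense. It is not compactly supported, since the kernel $\partial_i\partial_j\int G_s\,\mathrm{d}s$ has only polynomial decay $|x|^{-d}$ after integration in $s$. We therefore use a cutoff $\chi_R(x)=\chi(x/R)$ and test (b) with $\chi_R\phi$. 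The commutator error is
\begin{equation*}
\int_0^T\!\!\int u\cdot\bigl([\mathcal{L},\chi_R]\phi\bigr)\,\mathrm{d}x\,\mathrm{d}t,
\end{equation*}
and it involves terms of the form $R^{-1}\nabla\phi$ and $R^{-2}\phi$ on $\{|x|\sim R\}$. Assumption (a) bounds $u$ in $L^\infty$, so this error is at most $C\bigl(R^{-1}\|\nabla\phi\|_{L^1(|x|\sim R)}+R^{-2}\|\phi\|_{L^1(|x|\sim R)}\bigr)$. It remains to show that $\phi$ and $\nabla\phi$ are integrable with enough decay that these terms vanish as $R\to\infty$.

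That decay is the main obstacle. $K(t)$ itself is integrable in $x$, which follows from the $L^1$ estimate with $|\gamma|=0$ quoted in the introduction. However, $\partial_i\partial_j\int G_s\,\mathrm{d}s$ behaves for $|x|\gg\sqrt t$ like $t\,\partial_i\partial_j G$, so it is Gaussian at scale $\sqrt{t}$ and $\phi$ in fact decays rapidly. Precisely, $\int_{\mu t}^{(2\mu+\lambda)t}G_s\,\mathrm{d}s$ is a finite-time integral, so it is Gaussian-dominated with constant $c$ depending on $\max(\mu,2\mu+\lambda)T$. This gives Gaussian bounds on $\phi$ and $\nabla\phi$, uniform on $[0,T]$, and the error terms vanish. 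The initial-time boundary term does not appear, because the test functions in (b) are allowed to be nonzero at $t=0$ while $\psi$ is supported away from $0$. We must still verify that the identity (b) holds for $\chi_R\phi$, which is in $C_c^\infty([0,T)\times\mathbb{R}^d)$ after a mollification in time near $T$. That step is routine.

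Case (ii) is harder because no explicit kernel is available on $\mathbb{R}^d_+$; note that there is also no boundedness hypothesis on $u$. Here, however, the admissible class is large and already contains non-compactly-supported functions with $|\nabla\phi|\le C(1+|x|^2)^{-d/2}$. So we only need to construct $\phi$ in that class. We construct it by solving the backward Dirichlet Lam\'e problem on the half-space using the resolvent/semigroup theory on $L^1$ and $W^{2,1}$. For this we use a Poisson-kernel (Green matrix) representation for the half-space Lam\'e system, as in the Stokes analogue. We then verify the regularity $C([0,T];W^{1,1}_0\cap W^{2,1}\cap C)\cap C^1([0,T];L^1)$ together with the gradient decay bound, which comes from the pointwise Green matrix bounds $|\nabla_x G(t,x,y)|\lesssim (t^{1/2}+|x-y|)^{-d-1}$ after integration in $t$. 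Once $\phi$ is admissible, the duality identity immediately gives $u=0$. The delicate point is obtaining these Green matrix gradient bounds, and I would take them from the known half-space theory rather than reprove them.
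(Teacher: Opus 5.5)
Your proposal is correct and is essentially the argument the paper intends. The paper gives no details beyond saying the proof follows the heat-equation case in \cite{book:GiGiSa}, and that is exactly the duality scheme you describe: solve the backward adjoint problem with right-hand side $\psi\in C_c^\infty$, use a spatial cutoff whose commutator is controlled by the boundedness of $u$ in (i), and in (ii) use the fact that the admissible class already contains non-compactly supported $\phi$.

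One correction concerns the kernel decay. The $s$-integral in $K$ runs between $\mu t$ and $(2\mu+\lambda)t$, both comparable to $t$, so the whole-space kernel has Gaussian bounds, and so does the half-space Dirichlet Green matrix (by Solonnikov-type estimates). Your remark about $|x|^{-d}$ decay should therefore be dropped. In (ii) it is these Gaussian bounds, not the Stokes-type gradient bound you quote, that give $\phi(t)\in L^1$, hence $\phi(t)\in W^{2,1}$, together with the required decay of $\nabla\phi$.
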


\section{$L^1$ estimate in Theorem~\ref{thm:1}} 

The result can be proved by arguments similar to those in~\cite{paper:FuIw-2025}, 
which established higher-order derivative estimates for the linear heat equation, 
together with Propositions~\ref{prop:200}--\ref{prop:12-004} (see also~\cite{paper:JeNa-1994}, \cite{paper:JeNa-1995} and~\cite{paper:IwMaTa-2018}).

To clarify the differences, we focus here on the following estimate.
\begin{prop}
For $\ell \in \mathbb{N}, \alpha > d / 2$ and $\widetilde \varphi \in \mathcal{S}(\mathbb{R})$, it holds
\begin{equation*}
||| \nabla^\ell \widetilde \varphi(- t \mathcal{L}) |||_\alpha \leq C t^{\frac{\alpha - \ell}{2}}, 
\quad 0 < t < 1, 
\end{equation*}
where $||| \nabla^\ell \widetilde \varphi(- t \mathcal{L}) |||_{\alpha}$ is defined by
\[
||| \nabla^\ell \widetilde \varphi(- t \mathcal{L}) |||_\alpha
\coloneqq \sup_{n \in {\mathbb Z}^d} 
  \left\| | \cdot - t^{1/2} n |^\alpha \nabla ^\ell \widetilde \varphi(- t \mathcal{L}) \chi_{C_t(n)}  \right\|_{L^2 \to L^2}.
\]
\end{prop}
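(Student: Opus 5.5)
To prove the weighted estimate I would follow the strategy of \cite{paper:IwMaTa-2018} and \cite{paper:FuIw-2025}. The idea is to trade the weight $|x - t^{1/2}n|^\alpha$ for commutators $\operatorname{Ad}^k$, and then control those commutators by means of the resolvent algebra of Proposition~\ref{prop:12-004}. Multiplication by $(x_j - t^{1/2}n_j)$ commutes with $\nabla^\ell$ only up to lower-order terms. I would handle this in two steps. First, write $\widetilde\varphi(-t\mathcal{L}) = R_{M,t}\,\psi(-t\mathcal{L})$, where $\psi(\lambda) = (1+\lambda)^M\widetilde\varphi(\lambda) \in \mathcal{S}$ and $M$ is large ($2M \ge \ell + \lceil\alpha\rceil + 1$). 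Second, on $\chi_{C_t(n)}$ the weight satisfies $|x - t^{1/2}n| \lesssim t^{1/2}$. It therefore suffices to bound
\[
\bigl\| \nabla^{\ell}\,(x_j - t^{1/2}n_j)^{k}\,\widetilde\varphi(-t\mathcal{L})\chi_{C_t(n)} \bigr\|_{L^2\to L^2}
\lesssim t^{\frac{k-\ell}{2}}
\]
for integers $0\le k\le K$, and then interpolate, using $|x-y|^\alpha \le |x-y|^{\lfloor\alpha\rfloor}$ times a factor plus Hölder in the weight, to reach non-integer $\alpha$. The point is that $(x_j - t^{1/2}n_j)^k$ acting on the left can be rewritten as $\sum \binom{k}{i}\operatorname{Ad}^{i}(\widetilde\varphi(-t\mathcal{L}))\,(x_j-t^{1/2}n_j)^{k-i}$. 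The factor $(x_j-t^{1/2}n_j)^{k-i}\chi_{C_t(n)}$ is then bounded by $t^{(k-i)/2}$. The commutator $\nabla^\ell (x_j-\cdot)^k$ produces terms $(x_j - \cdot)^{k-r}\nabla^{\ell-r}$, which have the same scaling.

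The core step is therefore the estimate $\|\nabla^\ell \operatorname{Ad}^i(\widetilde\varphi(-t\mathcal{L}))\|_{L^2\to L^2} \lesssim t^{(i-\ell)/2}$. I would obtain this by expanding $\operatorname{Ad}^i$ of a product. For the resolvent factors I use the recursion of Proposition~\ref{prop:12-004}, and for $\psi(-t\mathcal{L})$ I use a Helffer--Sjöstrand or Fourier representation through $e^{-i\tau R_{M,t}}$ together with the second formula of Proposition~\ref{prop:12-004}. The basic building blocks are then the following. The first is $\operatorname{Ad}^1(-t\mathcal{L})$, which for the Lamé operator is $t$ times a first-order operator with constant coefficients, namely $2\mu\partial_j + (\mu+\lambda)(e_j\,\mathrm{div} + \nabla(\cdot)_j)$. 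The second is $\operatorname{Ad}^2(-t\mathcal{L})$, which is $t$ times a constant matrix. All higher commutators vanish. Each such factor gains $t^{1/2}$ when sandwiched between resolvents, because $\|\nabla R_{m,t}\| \lesssim t^{-1/2}$ by Lemma~\ref{lem:12-013}. This gives $\|\operatorname{Ad}^i(R_{M,t})\| \lesssim t^{i/2}$ by induction. The $\tau$-integrals converge polynomially, which is absorbed by the Schwartz decay of the Fourier transform of the function $\lambda\mapsto\psi$ composed appropriately.

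The main obstacle is putting $\nabla^\ell$ on the leftmost factor. In the expansion, the leftmost operator is always some $R_{m_1,t}$ or $\operatorname{Ad}^{k}(R_{m_1,t})$, and I need $\|\nabla^\ell R_{m_1,t}\,g\|_{L^2}\lesssim t^{-\ell/2}\|g\|$ with $m_1$ possibly small. I would arrange the factorization so that an undifferentiated high-power resolvent $R_{M',t}$ with $2M'\ge \ell$ always sits at the far left. This is possible because $R_{M,t}$ commutes with every function of $\mathcal{L}$. The $\operatorname{Ad}$ of this extra factor again expands, and one iterates, the recursion terminating since $i$ is bounded. Proposition~\ref{prop:12-010} then gives the required $t^{-\ell/2}$, and this uses the $C^{\ell+2}$-type boundary regularity through Proposition~\ref{prop:200}. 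The restriction $t<1$ enters only there. Collecting the powers gives $t^{(k-\ell)/2}$, and hence the bound $t^{(\alpha-\ell)/2}$.
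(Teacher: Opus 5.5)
You correctly identify the main obstacle, but the device you propose to get around it does not work, so the core bound $\|\nabla^\ell\operatorname{Ad}^i(\widetilde\varphi(-t\mathcal{L}))\|_{L^2\to L^2}\lesssim t^{(i-\ell)/2}$ is left unproved. The rest of your framework is sound and matches the paper: the commutator expansion of the weight, Proposition~\ref{prop:12-004}, the explicit $\operatorname{Ad}^k(-t\mathcal{L})$, the representation through $e^{i\tau R_{M,t}}$, and H\"older in the weight for non-integer $\alpha$.

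\textbf{Why the rearrangement fails.} $\operatorname{Ad}$ is a derivation on products. So for any factorization $\widetilde\varphi(-t\mathcal{L})=R_{M',t}X$, the expansion of $\operatorname{Ad}^i(R_{M',t}X)$ contains the terms $\operatorname{Ad}^a(R_{M',t})\operatorname{Ad}^{i-a}(X)$ with $a\ge 1$. By Proposition~\ref{prop:12-004}, the leftmost factor of $\operatorname{Ad}^a(R_{M',t})$ is $\operatorname{Ad}^k(R_{m_1,t})$ with $m_1$ ranging down to $1$; for example, the term $R_{1,t}\operatorname{Ad}^1(-t\mathcal{L})R_{M',t}$ appears. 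Commutation of $R_{M,t}$ with functions of $\mathcal{L}$ does not help. Neither $\operatorname{Ad}^1(-t\mathcal{L})$ nor $\operatorname{Ad}^a(R_{M',t})$ is a function of $\mathcal{L}$, so you cannot slide an undifferentiated $R_{M',t}$ past them. Inserting further resolvent factors only regenerates terms of exactly this form, so your recursion does not terminate. Proposition~\ref{prop:12-010} by itself gives no bound on $\nabla^\ell R_{1,t}$ for $\ell\ge 3$.

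\textbf{The missing idea.} Only the total power $m_1+m_2$ matters. The regularity supplied by the right factor $R_{m_2,t}$ can be carried through the low-power left factor by the Dirichlet elliptic estimate. This is the paper's estimate \eqref{eq:12-011}:
\[
\|\nabla^\ell R_{m_1,t}\,e_k(t\partial_j)(R_{m_2,t}f)_i\|_{L^2}\le Ct^{-(\ell-1)/2}\|f\|_{L^2},\qquad \ell\le 2(m_1+m_2)-1,
\]
proved by induction on $\ell$.

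For instance, take $w=R_{1,t}h$ with $h=t\,\partial_j(R_{m_2,t}f)_i\,e_k$. Then $w\in H^1_0$ and $\mathcal{L}w=t^{-1}(w-h)$, so Proposition~\ref{prop:200} gives
\[
\|w\|_{H^{k+2}}\lesssim\|w\|_{L^2}+t^{-1}\|w\|_{H^k}+t^{-1}\|h\|_{H^k}.
\]
Bound $\|h\|_{H^k}$ by Proposition~\ref{prop:12-010} applied to $R_{m_2,t}$, and $\|w\|_{H^k}$ by the induction hypothesis. Since $t<1$, this yields $t^{-(k+1)/2}$, which is the required rate.

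The $\operatorname{Ad}^2(-t\mathcal{L})$ terms (where $\operatorname{Ad}^2(-t\mathcal{L})$ is $t$ times a constant matrix) and the nested $\operatorname{Ad}^k(R_{m_1,t})$ chains are handled by the same bootstrap, provided the total resolvent power is large. With this lemma in place of your left-factor rearrangement, the remainder of your argument goes through.
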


\begin{proof}
We follow the proof of Theorem~1.1 for $L^1$ estimate in~\cite{paper:FuIw-2025} 
(see also the proof of Lemma~7.1 in \cite{paper:IwMaTa-2018}).
We aim to show that
\begin{equation} \label{eq:12-006}
\left\| | \cdot - t^{1/2} n |^\alpha \nabla ^\ell R_{M, t} e^{i \tau R_{M, t}} \chi_{C_t(n)}  \right\|_{L^2 \to L^2} 
\leq C (1 + |\tau|^\alpha) t^{\frac{\alpha}{2} - \frac{\ell}{2}}
\end{equation}
for the case when $\alpha = 1$.

We define the commutator $\operatorname{Ad}^k$ as in Definition~\ref{def:12-005}:
\begin{equation*}
\begin{aligned}
&\operatorname{Ad}^0(L) = L, 
\\
&\operatorname{Ad}^k(L) = \operatorname{Ad}^{k-1}( (x_j - t^{1/2} n_j) L - L (x_j - t^{1/2} n_j) ), 
\quad k \geq 1, 
\end{aligned}
\end{equation*}
where $x_j, n_j$ denote the $j$-th element of $x, n$ respectively.
We can check that, the following holds.
\begin{equation} \label{eq:12-007}
\operatorname{Ad}^k (-t \mathcal{L}) f = 
\begin{cases}
-t \mathcal{L} f & \text{ if } k = 0, \\
2 \mu t (\partial_j f_j) e_j + (\mu + \lambda)t (\nabla f_j + (\text{div} \, f) e_j) & \text{ if } k = 1, \\
(-2\mu t - 2(\mu + \lambda t)) f_j e_j & \text{ if } k = 2, \\
0 & \text{ if } k \geq 3, \\
\end{cases}
\end{equation}
for $f \in L^2(\Omega)$. 
Here, $e_j$ denotes the unit vector whose $j$-th component is equal to $1$.

We return to the proof of~\eqref{eq:12-006}.
By commuting $(x_j - t^{1/2} n_j)$ with the operators $\nabla$, $R_{M, t}$, and $e^{i \tau R_{M, t}}$, 
the following holds for $\ell \ge 1$ and $f \in L^2(\Omega)$.
\[
\begin{aligned}
&\quad \left\| ( x_j - t^{1/2} n_j ) \nabla ^\ell R_{M, t} e^{i \tau R_{M, t}} \chi_{C_t(n)} f \right\|_{L^2}
\\
&\leq \ell \| \nabla^{\ell - 1} R_{M, t} e^{i \tau R_{M, t}} \chi_{C_t(n)} f \|_{L^2} + 
  \| \nabla^\ell \mathrm{Ad}^1 (R_{M, t}) e^{i \tau R_{M, t}} \chi_{C_t(n)} f \|_{L^2} 
  \\
  &\quad + \| \nabla^\ell R_{M, t} \mathrm{Ad}^1 (e^{i \tau R_{M, t}}) \chi_{C_t(n)} f \|_{L^2} + 
  \| \nabla^\ell R_{M, t} e^{i \tau R_{M, t}} (x_j - t^{1/2} n_j)  \chi_{C_t(n)} f \|_{L^2}
\\
&\eqqcolon I + II + III + IV.
\end{aligned}
\]

For $II$, the following holds by Proposition~\ref{prop:12-004} and~\eqref{eq:12-007}.
\begin{equation*}
\begin{aligned}
\mathrm{Ad}^1 ( R_{M, t} )
= \sum_{m_1 + m_2 = M+1} (1 - t \mathcal{L})^{-m_1} \mathrm{Ad}^1(- t \mathcal{L}) (1 - t \mathcal{L})^{-m_2}, 
\end{aligned}
\end{equation*}
where $m_1$ and $m_2$ in the above sum satisfies $m_1, m_2 \geq 1$.

In addition, we can prove $\ell-$th derivatives estimate for any $1 \leq i, j, k \leq d$, $m_1, m_2 \geq 0$ 
and $\ell \leq 2(m_1 + m_2) - 1$, 
\begin{equation} \label{eq:12-011}
\| \nabla^\ell (1-t \mathcal{L})^{-m_1} e_k (t \partial_{x_j}) ((1-t \mathcal{L})^{-m_2} f)_i \|_{L^2}
\leq C t^{-\frac{\ell - 1}{2}} \| f \|_{L^2}, t \in (0, 1), 
\end{equation}
by using the following estimate and induction on $\ell$. 

In fact, the case when $\ell = 0, 1$ follows from Proposition~\ref{prop:12-010}. 
If we assume the inequalities up to $(\ell-1)$-th derivative order, 
then it follows from higher order elliptic estimates and Proposition~\ref{prop:12-010}.
\begin{equation*}
\begin{aligned}
\| \nabla^\ell (1-t \mathcal{L})^{-m_1} e_k (t \partial_{x_j}) ((1-t \mathcal{L})^{-m_2} f)_i \|_{L^2}
\leq C t^{-\frac{\ell-1}{2}} \| f \|_{L^2}, 
\end{aligned}
\end{equation*}
for $0 < t < 1$ and $f \in L^2(\Omega)$.
This proves the case of $\ell$-th order and implies
\begin{equation*}
\| \nabla^\ell \mathrm{Ad}^1 (R_{M, t}) \|_{L^2 \to L^2} \leq C t^{-\frac{\ell-1}{2}}.
\end{equation*}

Next, we see the estimate of $\mathrm{Ad}^1 (e^{i \tau R_{M, t}})$ appearing in $III$.
From the latter part of Proposition~\ref{prop:12-010}, we have
\begin{equation*}
\begin{aligned}
\mathrm{Ad}^1 (e^{i \tau R_{M, t}}) = 
- i \int^{\tau}_{0} (e^{-isR_{M, t}}) \mathrm{Ad}^{1}(R_{M, t}) (e^{-i(\tau-s)R_{M, t}}) \, {\mathrm d}s
\end{aligned}
\end{equation*}
and
\begin{equation*}
\begin{aligned}
&\quad \| \mathrm{Ad}^1 (e^{i \tau R_{M, t}}) \|_{L^2 \to L^2}
\\
&\leq \int_{0}^{\tau} \| e^{-isR_{M, t}} \|_{L^2 \to L^2} 
  \| \mathrm{Ad}^{1}(R_{M, t}) \|_{L^2 \to L^2} \| e^{-i(\tau-s)R_{M, t}} \|_{L^2 \to L^2} \, {\mathrm d}s
\\
&\leq C t^{1/2} | \tau |.
\end{aligned}
\end{equation*}

In conclusion, from $I$ to $IV$, we obtain estimates analogous to those in~\cite{paper:FuIw-2025}.
\begin{equation*}
\begin{aligned}
&I \leq C \| \nabla^{\ell - 1} R_{M, t} \|_{L^2 \to L^2}  \| e^{i \tau R_{M, t}} \|_{L^2 \to L^2}  \| \chi_{C_t(n)} f \|_{L^2} \leq C t^{-\frac{\ell-1}{2}} \| f \|_{L^2}, 
\\
&II \leq C \| \nabla^\ell \mathrm{Ad}^1 (R_{M, t}) \|_{L^2 \to L^2} \| e^{i \tau R_{M, t}} \|_{L^2 \to L^2}  \| \chi_{C_t(n)} f \|_{L^2} \leq C t^{-\frac{\ell-1}{2}} \| f \|_{L^2}, 
\\
&III \leq \| \nabla^\ell R_{M, t} \|_{L^2 \to L^2}  \| \mathrm{Ad}^1 (e^{i \tau R_{M, t}}) \|_{L^2 \to L^2}  \| \chi_{C_t(n)} f \|_{L^2} \leq C t^{-\frac{\ell-1}{2}} |\tau| \| f \|_{L^2}, 
\\
&IV \leq \| \nabla^\ell R_{M, t} \|_{L^2 \to L^2}  \| e^{i \tau R_{M, t}} \|_{L^2 \to L^2}  \| (x_j - t^{1/2} n_j)  \chi_{C_t(n)} f \|_{L^2} \leq C t^{-\frac{\ell-1}{2}} \| f \|_{L^2}.
\end{aligned}
\end{equation*}
Therefore, \eqref{eq:12-006} is proved.

\end{proof}

\section{$L^\infty$ estimate in Theorem~\ref{thm:1}}

We prove the estimate for $p = \infty$ in Theorem~\ref{thm:1}, we will show the following.

\begin{thm}
Let $\ell \geq 2$. Suppose that $\Omega \subset {\mathbb R}^d$ be a bounded domain with $C^{\ell+2}$ boundary.
Then, there exists $C > 0$ and $T > 0$ such that
\[
\sup_{0 < t < T}{\| N_\ell[u](t) \|_{L^\infty}} < C \| u_0 \|_{L^\infty}
\]
holds for any solution $u(t)$ of $\eqref{eq:L}$ with $u_0 \in C_c^{\infty}(\Omega)$, 
where $ N_\ell[u]$ is defined by~\eqref{eq:20260323-5}.
\end{thm}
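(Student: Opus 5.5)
We argue by contradiction and blow up, following the Abe--Giga argument for the Stokes equations, adapted to the higher-order quantity $N_\ell[u]$. Suppose the estimate fails for every $T>0$ and $C>0$. Then there are $T_m\downarrow 0$, initial data $u_{0,m}\in C_c^\infty(\Omega)$ normalized by $\|u_{0,m}\|_{L^\infty}=1$, and solutions $u_m$ with $\sup_{0<t<T_m}\|N_\ell[u_m](t)\|_{L^\infty}\to\infty$. By Lemma~\ref{lem:220} this supremum is finite for each $m$. We may therefore pick $t_m\in(0,T_m)$ and $x_m\in\Omega$ with
\[
N_\ell[u_m](t_m,x_m)\ \ge\ \tfrac12 M_m,\qquad M_m:=\sup_{0<t<t_m}\|N_\ell[u_m](t)\|_{L^\infty}\to\infty,
\]
where $t_m$ is chosen close to the place where the running supremum doubles.

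Next we rescale. Set $v_m(t,x):=M_m^{-1}u_m(t_m t,\,x_m+t_m^{1/2}x)$ on the dilated domain $\Omega_m:=t_m^{-1/2}(\Omega-x_m)$ and for $t\in(0,1]$. Because $\mu\Delta+(\mu+\lambda)\nabla\operatorname{div}$ is homogeneous of degree two, $v_m$ again solves \eqref{eq:L}. Moreover $N_\ell$ is scale invariant, so
\[
\sup_{0<t<1}\|N_\ell[v_m](t)\|_{L^\infty(\Omega_m)}\le 1,\qquad N_\ell[v_m](1,0)\ge\tfrac12,\qquad \|v_m(0)\|_{L^\infty}=M_m^{-1}\to0 .
\]
Up to rotation, and after passing to a subsequence, $\Omega_m$ converges to either $\mathbb R^d$ or a half space $\mathbb R^d_+$, according to whether $t_m^{-1/2}\operatorname{dist}(x_m,\partial\Omega)$ stays bounded or not. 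The $C^{\ell+2}$ regularity of $\partial\Omega$ guarantees that the boundary flattens locally in $C^{\ell+1}$ under this rescaling.

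The third step is compactness and passage to the limit. The bound $t^{s/2}|\nabla^s v_m|\le1$ for $s\le\ell$ gives uniform control of derivatives up to order $\ell$ on sets $[\delta,1]\times K$. Since $\partial_t v_m=\mathcal L v_m$ and $\ell\ge2$, it also gives Lipschitz control in time. By Arzel\`a--Ascoli, $\nabla^s v_m\to\nabla^s v$ locally uniformly on $(0,1]\times\overline{\Omega_\infty}$ for $s\le\ell-1$. To get convergence of the top-order term $\nabla^\ell v_m$ pointwise at $(1,0)$, we use the local higher-order elliptic and parabolic estimates behind Proposition~\ref{prop:200}, applied with a cutoff to $\mathcal L v_m=\partial_t v_m$. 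The limit $v$ is bounded, vanishes on the boundary in the half-space case, and satisfies the weak formulation in Proposition~\ref{prop:12-015}. Since $v_m(0)\to0$ in $L^\infty$, the initial term disappears, so $v$ satisfies (b) with zero data.

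For $\Omega=\mathbb R^d_+$, the test functions required in Proposition~\ref{prop:12-015}(ii) decay only algebraically. We handle this by bounding $\int|v_m||\partial_t\phi+\mathcal L\phi|$ uniformly, using $|v_m|\le1$ and the decay of $\nabla\phi$, and then using dominated convergence. Proposition~\ref{prop:12-015} then gives $v\equiv0$, which contradicts $N_\ell[v](1,0)\ge\tfrac12$.

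We expect the main obstacle to be this contradiction step, for two reasons. First, it requires strong enough convergence of $\nabla^\ell v_m$ at the single point $(1,0)$. Second, near-boundary points $x_m$ need uniform local $W^{\ell+1,r}$ estimates, with $r>d$, on the varying domains $\Omega_m$. Our first attempt at both would be a localized version of the argument of Lemma~\ref{lem:220} with constants that are uniform under flattening. The exponential decay for $t\ge1$ then follows from the spectral gap of $-\mathcal L$ together with the same estimate applied on unit time intervals.
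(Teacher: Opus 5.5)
Your proposal is correct and follows essentially the same route as the paper. Both use the Abe--Giga contradiction argument with the rescaling $v_m(t,x)=M_m^{-1}u_m(t_m t,x_m+t_m^{1/2}x)$, split into a whole-space or half-space limit according to $c_m=t_m^{-1/2}\operatorname{dist}(x_m,\partial\Omega)$, use local uniform convergence of derivatives up to order $\ell$, and apply the uniqueness result of Proposition~\ref{prop:12-015} to contradict $N_\ell[v](1,0)\ge\frac12$.

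Two small points. Your case pairing is reversed as written: bounded $c_m$ gives the shifted half space $\{x_d>-c_0\}$, and unbounded $c_m$ gives $\mathbb{R}^d$. Also, in the half-space case the test function $\phi$ lives on the limit domain rather than on $\Omega_m$; the paper handles this by integrating by parts on $\Omega_m\cap B_R^+$ and showing that the resulting boundary terms vanish in the limit, a step you leave implicit.
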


\begin{pr}
We follow the arguments in~\cite{paper:AbGi-2013} for the Stokes equations (see~\cite{paper:FuIw-2025}).
We prove by contradiction. Assume that there exist a sequence $\{ u_m \}_{m \in {\mathbb N}}$ 
of solution to $\eqref{eq:L}$ and a sequence $\{ \tau_m \}_{m \in {\mathbb N}}$ such that
\[
\begin{aligned}
    \| N_\ell[u](\tau_m) \|_{L^\infty} 
    > m \| u_{0, m} \|_{L^\infty}, 
    \\
    \tau_m \searrow 0 \, (m \to \infty),
\end{aligned}
\]
where $u_{0, m} \in C_c^{\infty}(\Omega)$ is an initial data of $u_m$.
  
By Lemma~\ref{lem:220}, it holds
\[
M_m \coloneqq \sup_{t \in (0, \tau_m)}{\| N[u_m](t) \|_{L^\infty}} < \infty,
\]
and we define $v_m$ as follows:
\begin{equation*}
\begin{aligned}
&\quad v_m (t, x) \coloneqq \frac{ u_m (t_m t, x_m + t_m^{1/2} x) }{M_m} 
\quad \text{ for } (t, x) \in (0, 1) \times \Omega_m, 
\\
&\text{where } \Omega_m := \Big\{ y \in \mathbb{R}^d \, \Big| \, y = \frac{x - x_m}{t_m^{1/2}}, x \in \Omega \Big\}.
\end{aligned}
\end{equation*}
Here, $(t_m, x_m) \in (0, \tau_m) \times \Omega$ is a point satisfying $N_\ell[u_m](t_m, x_m) \geq M_m / 2$.
Then, $v_m$ satisfies the parabolic Lam\'{e} system and the following.
\begin{equation*} \label{eq:12-014}
\begin{aligned}
N_\ell[v_m](1, 0) \geq \frac{1}{2}, 
\quad \sup_{t \in (0, 1)} \| N_\ell[v_m](t) \|_{L^\infty} \leq 1, 
\quad \| v_{0, m} \|_{L^\infty} < \frac{1}{m}.
\end{aligned}
\end{equation*}

Let us define the quantity 
\[
c_m \coloneqq \frac{{\rm dist}(\Omega, x_m)}{t_m^{1/2}} = {\rm dist}(\Omega_m, 0), 
\]
which is related to the limit of $v_m$.

Case 1. $\displaystyle \limsup_{m \to \infty}c_m = \infty$

We can prove samely as Case~1 in the proof of Theorem~4.1 in~\cite{paper:FuIw-2025} with replacing $\Delta$ by $- \mathcal{L}$.

\vspace{3mm}

Case 2. $\displaystyle \limsup_{m \to \infty}c_m < \infty$

Exactly as in~\cite{paper:FuIw-2025}, after proper rotations and translations, 
there exist a sequence $\{v_{m_k}\}_{k \in \mathbb{N}}$ of $\{ v_m \}_{m \in \mathbb{N}}$ 
and a function $v$ satisfying the following properties as Case 1.
\begin{equation*}
\begin{aligned}
v_{m} \to v
\quad \text{ locally uniformly in } \overline{\mathbb{R}}^d_{+, -c_0} \times (0, 1], 
\end{aligned}
\end{equation*}
and local uniform convergence of derivatives up to $\ell$-th order.
Here, 
\[
\mathbb{R}^d_{+, -c_0} \coloneqq \{ (x', x_d) \in \mathbb{R}^d \mid x_d > -c_0 \}.
\]

Fix $R > 0$ and let $B_R^+ = B_R(0, \ldots, 0, -c_0) \cap {\mathbb R}^d_{+, -c_0}$.
Set a test funciton 
\[
\phi \in  C\Big([0, 1] ; W^{1,1}_0(R^d_{+, -c_0}) \cap W^{2, 1}(R^d_{+, -c_0}) \cap C(R^d_{+, -c_0})\Big) \cap C^1\Big([0, 1] ; L^1(R^d_{+, -c_0})\Big)
\]
with $|\nabla \phi(x)| \leq C (1 + |x|^2)^{-d/2}$.
Since $v_{m_k}$ satisfies the parabolic Lam\'{e} system on $\Omega_{m_k} \cap B_R^+$, 
it holds for sufficient large $k$, 
\begin{equation*}
\begin{aligned}
0 
&= \int_0^1 \int_{\Omega_{m_k} \cap B_R^+} \mathcal{L} {v}_{m_k} \cdot \phi \, {\mathrm d}x {\mathrm d}t
- \int_0^1 \int_{\Omega_{m_k} \cap B_R^+} \partial_t {v}_{m_k} \cdot \phi \, {\mathrm d}x {\mathrm d}t
\\
&= - \mu \left( \int_0^1 \int_{\Omega_{m_k} \cap B_R^+} \nabla {v}_{m_k} : \nabla\phi \, {\mathrm d}x {\mathrm d}t
  - \int_0^1 \int_{\partial(\Omega_{m_k} \cap B_R^+)} (\nabla {v}_{m_k} \boldsymbol{n}) \cdot \phi \, {\mathrm d}x {\mathrm d}t \right)
  \\
  &\quad - (\mu + \lambda) \left( \int_0^1 \int_{\Omega_{m_k} \cap B_R^+} \text{div} \, {v}_{m_k} \text{div} \, \phi \, {\mathrm d}x {\mathrm d}t
  - \int_0^1 \int_{\partial(\Omega_{m_k} \cap B_R^+)} \text{div} \, {v}_{m_k} (\phi \cdot \boldsymbol{n}) \, {\mathrm d}x {\mathrm d}t \right)
  \\
  &\quad - \int_0^1 \int_{\Omega_{m_k} \cap B_R^+} \partial_t {v}_{m_k} \cdot \phi \, {\mathrm d}x {\mathrm d}t
\end{aligned}
\end{equation*}

By applying the argument of Theorem~4.1 in~\cite{paper:FuIwKo-2024} to each component, 
we eventually obtain the following.
\begin{equation*}
\begin{aligned}
&\quad \lim_{R \to \infty} \lim_{m \to \infty} \int_0^1 \int_{\Omega_m \cap B_R^+} \mathcal{L} v_{m_k} \cdot \phi \, {\mathrm d}x {\mathrm d}t
= \int_0^1 \int_{\mathbb R^d_{+, -c_0}} {v} \cdot \mathcal{L} \phi \, {\mathrm d}x {\mathrm d}t, 
\\
&\quad \lim_{R \to \infty} \lim_{m \to \infty} \int_0^1 \int_{\Omega_m \cap B_R^+} \partial_t {v}_{m_k} \cdot \phi \, {\mathrm d}x {\mathrm d}t
\\
&= - \int_0^1 \int_{\mathbb R^d_{+, -c_0}} {v} \cdot \partial_t \phi \, {\mathrm d}x {\mathrm d}t
- \int_{\mathbb R^d_{+, -c_0}} {v}(0, x)\cdot \phi(0, x) \, {\mathrm d}x.
\end{aligned}
\end{equation*}
Therefore, ${v}$ satisfies
\[
\int_0^1 \int_{\mathbb R^d_{+, -c_0}} {v} \cdot (\mathcal{L} \phi +\partial_t \phi) \, {\mathrm d}x {\mathrm d}t
= - \int_{\mathbb R^d_{+, -c_0}} {v}(0, x) \cdot \phi(0, x) \, {\mathrm d}x, 
\]
which implies $v \equiv 0$ by Proposition~\ref{prop:12-015}.
However, it contradicts 
\[
N_\ell[v](1, 0) = \lim_{m \to \infty} N_\ell[v_m](1, 0) \geq 1/2.
\]
  
\end{pr}

\section{Proof of Theorem~\ref{thm:2}}

By an argument same as the proof of Theorem~1.3 in~\cite{paper:Iw-2018} with replacing $\Delta$ by $- \mathcal{L}$, 
we can prove~\eqref{eq:20260205-1} and 
\begin{equation} \label{eq:20260322-1} 
\begin{aligned} 
C^{-1} \| u_0 \|_{\dot{B}^{s}_{p, q}(\mathcal{L})} 
\leq \left\{ \int_0^\infty (t^{-\frac{s}{2}} \| (-t \mathcal{L})^{s_0} e^{t \mathcal{L}} u_0 \|_{\dot{B}^{0}_{p, r}(\mathcal{L})} )^q
\frac{\mathrm{d}t}{t} \right\}^{\frac{1}{q}}
\leq C \| u_0 \|_{\dot{B}^{s}_{p, q}(\mathcal{L})}.
\end{aligned}
\end{equation}
with $1 \leq r \leq \infty$. 
We consider the proof of~\eqref{eq:20260205-2}.

\noindent \textit{The first inequality of~\eqref{eq:20260205-2}.}
Note that the operator $\mathcal{L}$ can be written in divergence form
\begin{equation*}
\begin{aligned}
-\mathcal{L} u = -(\mu \Delta + (\mu + \lambda)\nabla \text{div}) u = -\text{div} \, \sigma(u), 
\\
\sigma(u) \coloneqq \mu(\nabla u + (\nabla u)^\top) + \lambda (\text{div} \, u) I.
\end{aligned}
\end{equation*}
We first focus on the case $\alpha = 1$. 
From the inequality~\eqref{eq:20260322-1} with $s_0 = 1/2, s < 1$ and $r = \infty$, we have 
\begin{equation} \label{eq:20260322-2}
\begin{aligned}
\| u_0 \|_{\dot{B}_{p, q}^s}
&\leq C \left\{ \int_0^\infty (t^{-s/2} \| (-t\mathcal{L})^{1/2} e^{t \mathcal{L}} u_0 \|_{\dot{B}_{p, \infty}^0})^q 
\frac{1}{t} \, \mathrm{d}t \right\}^{1/q}
\\
&= C \left\{ \int_0^\infty (t^{-s/2} \| t^{1/2} (-\mathcal{L})^{-1/2} (-\mathcal{L}) e^{t \mathcal{L}} u_0 \|_{\dot{B}_{p, \infty}^0})^q 
\frac{1}{t} \, \mathrm{d}t \right\}^{1/q}
\\
&= C \left\{ \int_0^\infty (t^{-s/2} \| t^{1/2} (-\mathcal{L})^{-1/2} \text{div} \, \sigma( e^{t \mathcal{L}} u_0) \|_{\dot{B}_{p, \infty}^0})^q 
\frac{1}{t} \, \mathrm{d}t \right\}^{1/q}.
\end{aligned}
\end{equation}
Furthermore, by the definition of the Besov norm together with a duality argument, the following holds.
\begin{equation*}
\begin{aligned}
&\quad \| t^{1/2} (-\mathcal{L})^{-1/2} \text{div} \, \sigma( e^{t \mathcal{L}} u_0) \|_{\dot{B}_{p, \infty}^0}
\\
&= \sup_{j \in \mathbb{Z}} ( \| \phi_j(\sqrt{-\mathcal{L}}) t^{1/2} (-\mathcal{L})^{-1/2} \text{div} \, \sigma( e^{t \mathcal{L}} u_0) \|_{L^p} )
\\
&\leq C \sup_{j \in \mathbb{Z}} ( \| \phi_j(\sqrt{-\mathcal{L}}) (-\mathcal{L})^{-1/2} \text{div} \|_{L^p \to L^p} 
 \| t^{1/2} \nabla e^{t \mathcal{L}} u_0 \|_{L^p} )
\\
&\leq C \sup_{j \in \mathbb{Z}} ( \| \nabla  (-\mathcal{L})^{-1/2} \phi_j(\sqrt{-\mathcal{L}}) \|_{L^{p'} \to L^{p'}} 
 \| t^{1/2} \nabla e^{t \mathcal{L}} u_0 \|_{L^p} )
\end{aligned}
\end{equation*}

For any $1 \leq p \leq \infty, j \in \mathbb{Z}$, 
Theorem~\ref{thm:1} in this paper and Lemma~2.1 in~\cite{paper:Iw-2018} imply estimates that are uniform in $j$.
\begin{equation*}
\begin{aligned}
&\quad \| \nabla (-\mathcal{L})^{-1/2} \phi_j(\sqrt{-\mathcal{L}}) \|_{L^{p'} \to L^{p'}}
\\
&\leq \| \nabla e^{2^{-2j} \mathcal{L}} e^{-2^{-2j} \mathcal{L}} (-\mathcal{L})^{-1/2} \phi_j(\sqrt{-\mathcal{L}}) \|_{L^{p'} \to L^{p'}}
\\
&\leq \| \nabla e^{2^{-2j} \mathcal{L}} \|_{L^{p'} \to L^{p'}} 
  \| e^{-2^{-2j} \mathcal{L}} (-\mathcal{L})^{-1/2} \phi_j(\sqrt{-\mathcal{L}}) \|_{L^{p'} \to L^{p'}} 
\\
&\leq C 2^j \| e^\lambda (2^{2j} \lambda)^{-1/2} \phi_0(\sqrt{\lambda}) \|_{H^\beta(\mathbb{R})}
\\
&\leq C 2^j \cdot 2^{-j} 
\\
&\leq C.
\end{aligned}
\end{equation*}
Here, $\beta$ is a real number satisfying $\beta > (d+1) / 2$.
From the above and~\eqref{eq:20260322-2}, the following holds.
\begin{equation*}
\begin{aligned}
\| u_0 \|_{\dot{B}_{p, q}^s}
&\leq C \left\{ \int_0^\infty (t^{-s/2} \| t^{1/2} \nabla e^{t \mathcal{L}} u_0 \|_{L^p} )^q 
\frac{1}{t} \, \mathrm{d}t \right\}^{1/q}.
\end{aligned}
\end{equation*}

We briefly describe the case of a general $\alpha$.
If $\alpha$ is even ($\alpha = 2k$), then inequality~\eqref{eq:20260322-2} can be transformed as follows.
\begin{equation*}
\begin{aligned}
\| u_0 \|_{\dot{B}_{p, q}^s}
&\leq C \left\{ \int_0^\infty (t^{-s/2} \| (-t\mathcal{L})^{\alpha/2} e^{t \mathcal{L}} u_0 \|_{\dot{B}_{p, \infty}^0})^q 
\frac{1}{t} \, \mathrm{d}t \right\}^{1/q}
\\
&\leq C \left\{ \int_0^\infty (t^{-s/2} \| t^{\alpha/2} \nabla^{\alpha} e^{t \mathcal{L}} u_0 \|_{\dot{B}_{p, \infty}^0})^q 
\frac{1}{t} \, \mathrm{d}t \right\}^{1/q}.
\end{aligned}
\end{equation*}
On the other hand, if $\alpha$ is odd ($\alpha = 2k + 1$), it suffices to rewrite inequality~\eqref{eq:20260322-2} as follows
and then apply the preceding estimate.
\begin{equation*}
\begin{aligned}
\| u_0 \|_{\dot{B}_{p, q}^s}
&\leq C \left\{ \int_0^\infty (t^{-s/2} \| (-t\mathcal{L})^{\alpha/2} e^{t \mathcal{L}} u_0 \|_{\dot{B}_{p, \infty}^0})^q 
\frac{1}{t} \, \mathrm{d}t \right\}^{1/q}
\\
&\leq C \left\{ \int_0^\infty (t^{-s/2} \| t^{\alpha/2} (-\mathcal{L})^{-1/2} \text{div} \, \sigma( \mathcal{L}^k e^{t \mathcal{L}} u_0) \|_{\dot{B}_{p, \infty}^0})^q 
\frac{1}{t} \, \mathrm{d}t \right\}^{1/q}
\\
&\leq C \left\{ \int_0^\infty (t^{-s/2} \| t^{\alpha/2} \nabla^{\alpha} e^{t \mathcal{L}} u_0 \|_{\dot{B}_{p, \infty}^0})^q 
\frac{1}{t} \, \mathrm{d}t \right\}^{1/q}.
\end{aligned}
\end{equation*}

\noindent \textit{The second inequality of~\eqref{eq:20260205-2}.}
Theorem~\ref{thm:1} in this paper implies that
\begin{equation*}
\begin{aligned}
\| t^{\alpha/2} \nabla^\alpha e^{t \mathcal{L}} u_0 \|_{L^p} 
&= t^{\alpha/2} \| \nabla^\alpha e^{t\mathcal{L} /2} e^{t\mathcal{L}/2} u_0 \|_{L^p}
\\
&\leq C \| e^{t\mathcal{L}/2} u_0 \|_{L^p}
\end{aligned}
\end{equation*}
and therefore, 
\begin{equation*}
\begin{aligned}
\quad& \left\{ \int_0^\infty (t^{-s/2} \| t^{\alpha/2} \nabla^\alpha e^{t \mathcal{L}} u_0 \|_{L^p} )^q 
\frac{1}{t} \, \mathrm{d}t \right\}^{1/q}
\\
&\leq C \left\{ \int_0^\infty (t^{-s/2} \|e^{t\mathcal{L}/2} u_0 \|_{L^p} )^q 
\frac{1}{t} \, \mathrm{d}t \right\}^{1/q}
\\
&\leq C \| u_0 \|_{\dot{B}_{p, q}^s}.
\end{aligned}
\end{equation*}

\end{document}